\documentclass[12pt]{amsart}
 \usepackage{amsmath,amssymb,enumerate,amsfonts,amsthm,graphicx,color}
 \usepackage[all]{xy}
 \usepackage[colorlinks=true, linkcolor=red, linktoc=page, citecolor=blue]{hyperref}
  \usepackage{tikz}
\setlength{\textheight}{8truein}
 \setlength{\textwidth}{5.5truein}
\setlength{\evensidemargin}{0.53truein}
\setlength{\oddsidemargin}{0.53truein}
\setlength{\topmargin}{0.3truein}
\newtheorem{thm}{Theorem}[section]
\newtheorem{cor}[thm]{Corollary}
\newtheorem{lem}[thm]{Lemma}

\theoremstyle{definition}
\newtheorem{defn}[thm]{Definition}
\theoremstyle{definition}
\newtheorem{rem}[thm]{Remark}
\theoremstyle{definition}

\theoremstyle{definition}
 \newtheorem{note}[thm]{Notation}
 \newtheorem{conj}[thm]{Conjecture}
 
  \newcommand{\OO}{\mathcal{O}}
  \newcommand{\PP}{\mathbb{P}}
  \newcommand{\II}{\mathcal{I}}
  \newcommand{\NN}{\mathbb{N}}

  \newcommand{\X}{\widetilde{X}}
   \newcommand{\Xp}{\widetilde{X'}}
  \newcommand{\Xz}{\widetilde{X''}}
   
  \newcommand{\T}{\widetilde{T}}

 \newcommand{\XX}{\mathcal{X}}
 \newcommand{\DD}{\mathfrak{D}}
 \newcommand{\CC}{\mathfrak{C}}
  
  \newcommand{\Ch}{\widehat{\mathfrak{C}}}
 \newcommand{\la}{\lambda}

\newcommand{\HH}{\mathbf{H}}

\begin{document}

\title{On the Hartshorne--Hirschowitz theorem}
\author{T.  Aladpoosh}
\address{ Tahereh Aladpoosh; School of Mathematics, Institute for Research in Fundamental
Sciences (IPM),  P.O.Box: 19395-5746, Tehran, Iran.}
\email{tahere.alad@ipm.ir}

\author{M. V.  Catalisano}
\address{Maria Virginia Catalisano;  Dipartimento di Ingegneria Meccanica, Energetica,
Gestionale e dei Trasporti, Universit\`{a} degli Studi  di Genova,
 Genoa, Italy.} \email{catalisano@dime.unige.it}

  \thanks{{\emph{Keywords}}: Good postulation; $(2,s)$-Cone
  configuration; Degeneration; Specialization;
   Generic
   lines.}
    \subjclass[2010]{14N20, 14C20, 14D06, 13D40}

\begin{abstract}
The Hartshorne--Hirschowitz  theorem  says that a generic union of
lines in $\PP^n$, $(n\geq 3)$,  has good postulation. The proof of
Hartshorne and Hirschowitz in the initial case  $\PP^3$ is difficult
and so long, which is handled by a method of specialization via a
smooth quadric surface with the property of having two rulings of
skew lines.
 We provide a proof in the case $\PP^3$  based on a new degeneration
 of disjoint
lines via a plane $H\cong\PP^2$, which we call \emph{$(2,s)$-cone
configuration}, that is a schematic union of $s$ intersecting lines
passing through a single point $P$ together with the trace of an
$s$-multiple point supported at $P$ on the double plane $2H$.
 In the first part of this paper, we discuss our degeneration
 inductive approach. We prove that a $(2,s)$-cone configuration
 is a degeneration of $s$ disjoint lines
 in $\PP^3$, or more generally in $\PP^n$.
 In the second part of the paper, we use this degeneration in an
 effective method  to show that a generic union of lines in $\PP^3$
 imposes independent conditions on the linear system $|\OO_{\PP^3}(d)|$ of surfaces of
  given degree $d$.
 The basic motivation
behind  our degeneration approach   is that
  it  looks more systematic that gives some hope of  extensions  to the analogous
  problem in higher dimensional  spaces, that is the postulation problem for $m$-dimensional planes in
  $\PP^{2m+1}$.
\end{abstract}
\maketitle
 \tableofcontents
\section{Introduction}
Given a closed subscheme $X\subset\PP^n$,  we say that $X$  has
{\emph{good postulation}} or {\emph{maximal rank}}
 if
   $X$ imposes the expected number of conditions to
  hypersurfaces of any degree. This is equivalent to saying that for
  each $d\geq 0$
  one or other of the integers  $h^0(\II_X(d)),~ h^1(\II_X(d))$
 is zero.
This problem is equivalent to computing the Hilbert function of $X$.
 Let $HF(X,d)$ be the Hilbert function of $X$ in
degree $d$, namely,  $HF(X,d)= h^0(\OO_{\PP^n}(d))- h^0(\II_X(d))$.
There is an expected value for the Hilbert function of $X$ in degree
$d$ given by a naive count of conditions. This value is determined
by assuming that $X$ imposes independent conditions on the linear
system $|\OO_{\PP^n}(d)|$, i.e.,
$$h^0(\II_X(d))= \max
\left\{h^0(\OO_{\PP^n}(d))-h^0(\OO_X(d)),0\right\},$$
 which implies
that $X$ has good postulation in degree $d$.

When we restrict our attention to the special class of schemes
$X\subset \PP^n$ which supported  on  unions of generic linear
spaces
 there is much interest in
the postulation problem  (see e.g. \cite{GMR}, \cite{HH},
\cite{CCG1}, \cite{Bal} for  reduced case, and \cite{CCG4},
\cite{AB}, \cite{B1}, \cite{Alad}, \cite{bau} for  non-reduced
case), yet surprisingly very little is known about them, even in the
reduced case.
 Specifically concerning the class of reduced schemes of
 generic linear spaces,
 the first obvious case is to take $X$ a generic collection of points
in $\PP^n$, according to \cite{GMR} it is well known that $X$  has
good postulation.
 In the next case concerning a  generic collection of lines in $\PP^n$,  there is
 a spectacular theorem  by
  R. Hartshorne and A. Hirschowitz \cite{HH}, going back to 1981, which states that:
 \begin{thm}[Hartshorne--Hirschowitz]\label{HH th}
 Let  $X\subset\PP^n$, $(n\geq 3)$,  be a generic union of $e$ lines.
Then $X$ has good postulation, i.e.,
$$h^0(\II_X(d)) = \max \left \{ {d+n \choose n}- e(d+1), 0 \right \}.$$
\end{thm}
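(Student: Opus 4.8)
The plan is to prove Theorem~\ref{HH th} by induction on $d$, reducing the problem in $\PP^3$ to a counting statement about $(2,s)$-cone configurations, and then leveraging the abstract degeneration result (proved in the first part of the paper) that a $(2,s)$-cone configuration is a flat degeneration of $s$ disjoint lines. By semicontinuity, if the special fiber (a union of generic lines together with an appropriate $(2,s)$-cone configuration) imposes independent conditions on $|\OO_{\PP^3}(d)|$, then so does the generic fiber; hence it suffices to control the postulation of the degenerate scheme. First I would set up the standard \emph{Castelnuovo inductive exact sequence} (the ``m\'ethode d'Horace'' / restriction to a plane $H\cong\PP^2$): for a scheme $Z\subset\PP^3$ and the chosen plane $H$, one has the exact sequence
\begin{equation*}
0\longrightarrow \II_{\mathrm{Res}_H Z}(d-1)\longrightarrow \II_Z(d)\longrightarrow \II_{Z\cap H, H}(d)\longrightarrow 0,
\end{equation*}
so that good postulation of $Z$ in degree $d$ follows from good postulation of the residual scheme $\mathrm{Res}_H Z$ in degree $d-1$ (an $h^1$-vanishing in $\PP^3$) together with good postulation of the trace $Z\cap H$ in degree $d$ (an $h^0$- or $h^1$-vanishing in $\PP^2$, where postulation of generic points and schemes is already understood).

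The crux of the method is the choice of degeneration. I would specialize some of the $e$ lines so that $s$ of them pass through a single point $P\in H$ and, in the limit, organize into a $(2,s)$-cone configuration: the $s$ concurrent lines through $P$ together with the length-$\binom{s+1}{2}$ trace on the double plane $2H$ coming from an $s$-fold point at $P$. The point of this particular scheme is bookkeeping: when one intersects with $H$ and takes the residual, the cone configuration splits cleanly into pieces whose contributions to trace and residual are each governed by lower-dimensional postulation data that we may assume known by induction (generic points in $\PP^2$ via \cite{GMR}, and fewer lines or a smaller degree in $\PP^3$). Concretely, I expect the trace $Z\cap H$ to consist of generic points on $H$ (the feet of the lines not through $P$) plus a fat-point or multiple-point structure at $P$ contributed by the cone, while the residual $\mathrm{Res}_H Z$ is again a union of lines of smaller degree relative to $d$, closing the induction.

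To make the numerics work I would peel off the lines in batches, choosing at each inductive step the number $s$ of concurrent lines so that the expected dimension count is matched exactly, i.e.\ so that the trace imposes the maximal number of conditions on $|\OO_{H}(d)|$ and the residual system has the predicted dimension. This requires a careful case analysis according to the residue of $e(d+1)$ against $\binom{d+3}{3}$ — distinguishing the subabundant regime $h^0(\II_X(d))=0$ from the superabundant regime $h^1(\II_X(d))=0$ — and verifying that the ``correction'' terms produced by the double-plane trace of the cone exactly absorb the discrepancy that a naive specialization of concurrent lines would otherwise create (the well-known fact that $s$ concurrent lines fail to impose independent conditions is precisely what the $s$-multiple point on $2H$ repairs).

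The main obstacle, and the heart of the argument, will be establishing the \textbf{base cases} of the double induction and verifying that the degeneration is \emph{applicable} in every congruence class of $(d,e)$: one must check that the required number of lines can actually be made concurrent through $P$ while the remaining lines and their traces stay generic, and that the resulting trace and residual schemes fall under hypotheses already covered by the inductive assumption. In practice this amounts to a finite but delicate list of sporadic low-degree configurations that must be handled by hand, together with a uniform inductive step valid for all large $d$; I expect the low-degree and boundary cases (where the count $\binom{d+3}{3}-e(d+1)$ is small in absolute value) to be where the genuine difficulty lies, exactly as in the original argument of Hartshorne and Hirschowitz, the difference being that the $(2,s)$-cone configuration gives a more systematic and transparent bookkeeping of the correction terms.
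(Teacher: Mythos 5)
Your outline does coincide with the paper's strategy: reduce to a critical configuration whose expected $h^0$ and $h^1$ both vanish, degenerate lines into a $(2,s)$-cone configuration lying in a plane $H$, apply Castelnuovo's inequality (Lemma \ref{cas}) together with semicontinuity, and induct on $d$ with an arithmetic case analysis. But your concrete prediction of how the cone configuration splits under the residual/trace step is inverted, and that inversion is not cosmetic --- it is where the entire bookkeeping of the proof lives. For $\Ch=\CC+\DD_{H,m}(P)$ with the cone $\CC\subset H$, one has $Tr_H(\Ch)=\CC$ (the $m$ lines themselves, since $mP|_H\subset\CC$) and $Res_H(\Ch)=(m-1)P|_H$: the lines land in the \emph{trace}, where they are then removed as a fixed component of the degree-$d$ curves on $H$, and the multiple point lands in the \emph{residual}. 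This is the opposite of your expectation that the trace is ``generic points plus a fat point at $P$'' and the residual is ``again a union of lines.'' (Incidentally, $\ell\bigl(\DD_{H,s}(P)\bigr)=s^2=\binom{s+1}{2}+\binom{s}{2}$, not $\binom{s+1}{2}$.)

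Because of this, the induction does not close on the original statement as you propose. The residual scheme is the union of the fat point $(m-1)P|_H$, generic lines, and degenerate conics whose singular points lie on $H$; this is a genuinely different statement ($\HH'_{d-1}$, Section \ref{h'}), which the paper proves by a separate induction split into the three congruence classes of $d$ modulo $3$, and which in turn requires a new lemma on a multiple point plus generic $2$-dots in $\PP^2$ (Lemma \ref{dot}). The trace statement ($\HH''_{d}$, Section \ref{h''}), after removing the cone as a fixed component, reduces to generic double points in $\PP^2$ and rests on the Alexander--Hirschowitz theorem (Corollary \ref{ah2}) --- an ingredient your sketch never invokes. Moreover, cone configurations alone cannot produce the double points and $2$-dots that make the planar counts come out right: the paper must additionally degenerate $s$ pairs of lines into sundials whose singular points are specialized into $H$, and must park the arithmetic remainder $q=\binom{d+3}{3}-r(d+1)$ as collinear points on a line specialized into $H$. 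Until the trace/residual accounting is corrected and these auxiliary statements and lemmas are supplied, your proposed induction has no valid inductive step.
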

Inspired by these results about points and skew lines, E. Carlini,
M. V. Catalisano and A. V. Geramita proposed a conjecture on the
postulation of generic disjoint unions of linear spaces \cite[\S
1]{CCG1}, which says that:
\begin{conj}[Carlini--Catalisano--Geramita]\label{conj}
If $X\subset\PP^n$ is  a generic union of linear spaces with
non-intersecting components, then $X$ has good postulation.
\end{conj}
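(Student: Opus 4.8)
The plan is to attack the conjecture by the same degeneration philosophy that underlies the $\PP^3$ result, pushing the $(2,s)$-cone configuration from lines to higher-dimensional linear spaces and running a double induction on the ambient dimension and the degree. Throughout, write $X=\bigsqcup_i L_i\subset\PP^n$ for a generic disjoint union of linear spaces, and recall that good postulation in degree $d$ amounts to $h^0(\II_X(d))=\max\{\binom{d+n}{n}-h^0(\OO_X(d)),0\}$; equivalently, $h^0(\II_X(d))$ and $h^1(\II_X(d))$ cannot both be nonzero. Since $h^0(\II_X(d))$ is upper semicontinuous in flat families, it suffices to exhibit, for each $(n,d)$ and each admissible collection of dimensions, a single flat specialization $X_0$ of $X$ for which the expected vanishing holds; the generic member then inherits it.

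First I would aim to reduce to the essential range. As in the reduced-line case, a generic union that splits off a component of large dimension, or of dimension so small that it behaves like a cluster of points, should be handled by slicing with a generic hyperplane $H\cong\PP^{n-1}$ and invoking the Castelnuovo restriction sequence
\[
0\to\II_{\mathrm{Res}_H X}(d-1)\to\II_X(d)\to\II_{X\cap H,\,H}(d)\to 0,
\]
so that control of the trace $X\cap H$ and of the residual $\mathrm{Res}_H X$ in the lower-dimensional (or lower-degree) problem feeds the induction. This isolates the genuinely hard, \emph{balanced} cases, namely the unions of $m$-planes in $\PP^{2m+1}$ singled out in the abstract.

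The core new step is to generalize the degeneration established earlier. Just as $s$ disjoint lines degenerate to the $(2,s)$-cone configuration — an $s$-fold cone of concurrent lines on $H$ together with the trace on the double plane $2H$ of an $s$-tuple point at the vertex — I would construct for $m$-planes an analogous configuration supported on a hyperplane $H$ and its double $2H$: a bundle of $m$-planes through a common center, enriched by the trace on $2H$ of a suitable fat linear subscheme. The goal is to prove the higher-dimensional analogue of the degeneration lemma, i.e.\ that this configuration is a flat limit of $s$ generic disjoint $m$-planes, and then to show directly that the limit imposes the expected conditions, by peeling off one $m$-plane at a time and matching the resulting exact sequences against the inductive hypothesis on $\PP^{n-1}$ and in lower degree.

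The main obstacle, I expect, is twofold. First, the non-reduced part: computing $h^0$ and $h^1$ for the trace on the multiple hyperplane $2H$ is itself a postulation problem for a fat, non-reduced scheme, and controlling it uniformly across all $(n,d,m)$ is precisely where the induction threatens to fail — in the critical degrees, where neither cohomology group is numerically forced to vanish, the degenerate configuration must come out exactly balanced, leaving no slack. Second, semicontinuity supplies only one inequality, so in the borderline degrees one must orchestrate the specialization in the correct direction and, where that is insufficient, combine it with a complementary (``reverse'') degeneration or a Horace differential argument to pin down $h^0$ and $h^1$ simultaneously. Getting the cone configuration and its fat-hyperplane trace to close the induction in these critical degrees is the crux on which a full proof of the conjecture would stand or fall.
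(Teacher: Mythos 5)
The statement you are addressing is a \emph{conjecture}, not a theorem of this paper: the paper records Conjecture~\ref{conj} precisely as an open problem, known only for $\dim X=0$ (generic points, via \cite{GMR}) and $\dim X=1$ (generic lines and points, via \cite{HH}), and it states explicitly that the conjecture ``remains widely open even for $\dim X=2$.'' What the paper actually proves is the $\PP^3$ case of the Hartshorne--Hirschowitz theorem by a new degeneration (the $(2,s)$-cone configuration); the extension to $m$-planes in $\PP^{2m+1}$ is offered there only as motivation and hoped-for future work. So there is no proof in the paper for your attempt to be compared against, and your text cannot be accepted as supplying one.

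Your proposal is a research program, not a proof, and every load-bearing step is left unestablished. First, the higher-dimensional analogue of the $(2,s)$-cone configuration is never actually defined: you do not say which fat linear subscheme on the double hyperplane $2H$ plays the role of $\DD_{H,s}(P)$, i.e.\ what its ideal sheaf is, and without that the object of study does not exist. Second, you do not prove the analogue of Lemma~\ref{cone}, that this configuration is a flat limit of $s$ disjoint $m$-planes; in the line case this already required explicit ideal-theoretic computations of the limit of the family $\{\XX_\la\}$, and nothing suggests the computation generalizes verbatim. Third, the postulation of the degenerate configuration is not computed: the residual/trace calculus with respect to $H$ produces non-reduced schemes on $\PP^{n-1}$ whose Hilbert functions are themselves unknown (indeed conjectural), so your induction has neither a verified base case nor a verified inductive step. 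Finally, you yourself identify that in the critical balanced degrees semicontinuity gives only one inequality and that some complementary argument (a ``reverse'' degeneration or a differential Horace method) would be needed --- but you do not supply it. Naming the obstacles correctly is valuable as a plan, and it does match the paper's own stated hopes; it does not constitute a proof of the conjecture.
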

As we have mentioned above, this conjecture is true for $\dim X= 0$,
where only points are involved, and for $\dim X= 1$, i.e. a generic
collection of lines and points, where we have
Hartshorne--Hirschowitz theorem about generic lines and also we know
how adding generic points to a scheme can still preserve its good
postulation \cite{GMR}. As soon as we go up to $\dim X> 1$, the
problem becomes more and more complicated. In fact, when $\dim X>1$
the results in the literature
 are  so little  and  the conjecture remains widely
open even for $\dim X=2$ (see e.g. \cite{CCG1} and \cite{Bal} for a
generic union of lines and a few planes).

 The basic natural step in the proof of Conjecture \ref{conj} for
$\dim X>1$,  would be to
 provide  an analogue of  Hartshorne--Hirschowitz
 theorem for a generic
 collection  of  planes in $\PP^n$,   $(n\geq 5)$,
  that seems to be extremely
 difficult, and even surprisingly enough, one may hope to generalize this
 approach to the  case of $m$-dimensional planes,  ($m$-planes for short), in $\PP^n$, $(n\geq
 2m+1)$.
 Actually, the
 postulation problem for $m$-planes is so difficult
 that one seldom expects to solve it completely, yet which provides
 stimulus for a great amount of efforts, and which enable us to make
 progress on this problem.

Nevertheless,  if we wish to go further in this direction we need to
analyze  the proof of Hartshorne and Hirschowitz.
  Interestingly enough,  their proof in the initial case $\PP^3$ is difficult and long by
using degeneration techniques via    a smooth quadric surface, which
occupies more than
 half of the length of the paper \cite{HH}.
 Indeed, one
aspect of the results in their paper   is the postulation of
degenerated schemes, which plays an important role in the proof of
their main theorem (Theorem \ref{HH th}). Roughly speaking, what we
mean  by \emph{degeneration} of a scheme, is a limiting scheme
inside a projective space of a flat family of original ones.
 Apart from the fact that,
as $X$ varies in a flat family, by the semicontinuity theorem  for
cohomology groups \cite[III, 12.8]{Hart}, the condition of good
postulation is an open condition on the family of $X$,
 one may use  degenerations and the
 semicontinuity theorem to investigate that $X$ has  good
 postulation. This  means that,
to prove  that $X$ has good postulation it is enough to find a
degeneration of $X$ which has good postulation.
 It is the degeneration that requires some artistry and a lot of
 technical details, which usually gives the most problems.
 It is precisely in
 this part of the procedure that we will  give some new
 ideas.
 In fact, the main goal of this paper is:  firstly  introducing  a new
degeneration of $s$ disjoint lines in $\PP^3$ (or even in $\PP^n$),
which we will call \emph{$(2,s)$-cone configuration}, and secondly
verifying the postulation of generic unions of lines by applying
successfully  this  method of degeneration. An  important feature of
the $(2,s)$-cone configuration  is that
 it will  be extremely useful
when we will
 attack the postulation problem of  generic
lines in $\PP^3$ using  degenerations via   a  $\PP^2$ instead of
the
 smooth quadric surface.

 The paper is organized as follows.
 Section \ref{bb} contains  preliminary material.
 In Section \ref{s2}, we first introduce the notion of
$(2,s)$-cone configuration (see \S \ref{def}); next we show that a
$(2,s)$-cone configuration is a flat limit  of a family of disjoint
unions of $s$ lines, that is a degeneration of $s$ skew lines (see
\S \ref{ext}, specially  Figure \ref{fig1},  for the case $s=3$  and
\S \ref{de2s} for the general case).
 In Section \ref{pr},
with the purpose of  stating  a proof of Hartshorne--Hirschowitz
theorem in the case of $\PP^3$  using this new method of
degeneration,
 we reformulate Theorem \ref{HH th} to a good postulation
statement $\HH_d$ (see \S \ref{st}); then  we propose  two other
good postulation  statements $\HH'_{d-1}$  and $\HH''_{d}$, which
are necessary for our inductive approach (see \S \ref{hhh}). Finally
 Sections \ref{h'}, \ref{h''} give the proofs  of the statements
  $\HH'_{d-1}$ and $\HH''_{d}$, respectively.

 We would like to finish Introduction by mentioning that,
with an eye towards
  handling the postulation problems of  disjoint unions of
   $m$-planes  in $\PP^n$ $(n\geq 2m+1)$,
   one motivation for us comes
from the fact  about the
 lack of  hypersurfaces in
the initial case $\PP^{2m+1}$  with geometric constructions analogue
to the smooth quadric
 surface in $\PP^3$; while hyperplanes have  constructions
   analogue to the plane.   This   leads us to provide a proof of
 Hartshorne--Hirschowitz theorem in $\PP^3$ using degenerations by a
 plane instead of the quadric surface.
 Actually,
 in analogy with the degeneration of $s$ disjoint lines,
we believe that the notion  of $(2,s)$-cone configuration can be
extended, somehow, for $s$ disjoint $m$-planes, which appears to be
ambitious and requires the most sophisticated investigations,
 and then  one may hope to generalize our approach to the postulation problem of
  $m$-planes.

\section{Preliminaries and Notations}\label{bb}
In this paper we  work  over an algebraically closed field $k$ with
characteristic zero.

 Given a closed subscheme $X$
of $\PP^n$, $I_X$ and $\mathcal{I}_X$ will denote the homogeneous
ideal and the  ideal sheaf of $X$, respectively.

 If  $X, Y$ are closed subschemes of $\PP^n$ and $X\subset Y$,
 then we denote by $\mathcal{I}_{X,Y}$ the
 ideal sheaf of $X$ in $\mathcal{O}_Y$.

 If  $X$ and $Y$  are two closed subschemes of $\PP^n$, we denote by
$X+Y$ the  schematic union of $X$ and $Y$, i.e. the subscheme of
$\PP^n$ defined by the ideal sheaf $\II_{X}\cap \II_{Y}\subset
\OO_{\PP^n}$.

 If $\mathcal{F}$ is a coherent sheaf on the scheme $X$, for any
integer $i\geq 0$ we use $h^i(X,\mathcal{F})$ to denote the
$k$-vector space dimension of the cohomology group
$H^i(X,\mathcal{F})$.
 In particular,
when $X= \PP^n$, we will often omit $X$ and we will simply write
$h^i(\mathcal{F}).$

\vspace{0.2cm}
The basic tool for the  study of the postulation problem is the so
called \emph{Castelnuovo's inequality} (for proof we refer to
\cite[Section 2]{AH} or \cite{AH2}).

We first recall the notion of residual scheme \cite[\S 9.2.8]{Ful}.
\begin{defn}
Let $X, Y$ be closed subschemes of $\PP^n$.
\begin{itemize}
\item [(i)]
The closed subscheme of $\PP^n$ defined by the ideal sheaf
$(\II_X:\II_{Y})$ is called the {\textbf{residual} } of $X$ with
respect to $Y$ and denoted by
 $Res_Y(X)$.
\item [(ii)]
The schematic intersection $X\cap Y$ defined by the ideal sheaf
 $(\II_X+\II_{Y})/\II_{Y}$ of $\OO_{Y}$ is called
 the {\textbf{trace} } of $X$ on $Y$ and denoted by  $Tr_Y(X)$.
\end{itemize}
\end{defn}
We note that the generally valid identity for ideal sheaves
$$(\II_{X_1}\cap\II_{X_2}:\II_{Y})=
(\II_{X_1}:\II_{Y})\cap(\II_{X_2}:\II_{Y})$$
 implies that the residual of the schematic union $X_1+X_2$ is the
 schematic union of the residuals.
\begin{lem}[Castelnuovo's Inequality]\label{cas}
 Let $d,e\in \NN$, and $d\geq e$. Let $H\subseteq{\PP^n}$
be a hypersurface of degree $e$, and let $X\subseteq{\PP^n}$ be a
closed subscheme. Then
$$h^0(\PP^n,\II_X(d))\leq
h^0(\PP^n,\II_{Res_H(X)}(d-e))+h^0(H,\II_{Tr_{H}(X)}(d)).$$
\end{lem}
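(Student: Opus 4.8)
The plan is to produce a single short exact sequence of sheaves on $\PP^n$ that links $\II_X$ to the ideal sheaves of the residual $Res_H(X)$ and the trace $Tr_H(X)$, and then to read off the inequality from the associated long exact sequence in cohomology. Let $f\in H^0(\OO_{\PP^n}(e))$ be a defining equation of $H$, so that $\II_H$ is the image of the multiplication map $\cdot f\colon \OO_{\PP^n}(-e)\to\OO_{\PP^n}$ and $\OO_H=\OO_{\PP^n}/\II_H$.

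First I would build the sequence
$$0\to \II_{Res_H(X)}(-e)\xrightarrow{\ \cdot f\ }\II_X\to \II_{Tr_H(X),H}\to 0.$$
The map on the right is the restriction to $\II_X$ of the quotient $\OO_{\PP^n}\to\OO_H$; by the very definition of the trace its image is $(\II_X+\II_H)/\II_H=\II_{Tr_H(X),H}$, so the map is surjective. The map on the left is injective because $f$ is a non-zero-divisor on the (locally) integral scheme $\PP^n$. The one point that needs care is exactness in the middle, i.e. identifying the kernel of $\II_X\to\II_{Tr_H(X),H}$, namely $\II_X\cap\II_H$, with the image $f\cdot\II_{Res_H(X)}$ of the left-hand map.

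To verify this I would argue ideal-theoretically. Since $\II_H=(f)$, we have $\II_{Res_H(X)}=(\II_X:\II_H)=(\II_X:f)$. A local section $g$ lies in $\II_X\cap\II_H$ exactly when $g\in\II_X$ and $g=fh$ for some $h$; but $fh\in\II_X$ is precisely the condition $h\in(\II_X:f)$, so $g=fh\in f\cdot\II_{Res_H(X)}$, and the reverse inclusion is immediate. Hence $\II_X\cap\II_H=f\cdot\II_{Res_H(X)}$, which under the isomorphism $\cdot f\colon \OO_{\PP^n}(-e)\xrightarrow{\ \sim\ }\II_H$ is exactly the image of $\II_{Res_H(X)}(-e)$, establishing exactness.

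Finally I would twist the sequence by $\OO_{\PP^n}(d)$ and pass to cohomology. The left-exact part reads
$$0\to H^0\bigl(\II_{Res_H(X)}(d-e)\bigr)\to H^0\bigl(\II_X(d)\bigr)\to H^0\bigl(\II_{Tr_H(X),H}(d)\bigr),$$
so that $h^0(\II_X(d))\leq h^0(\II_{Res_H(X)}(d-e))+h^0(\II_{Tr_H(X),H}(d))$. Since $\II_{Tr_H(X),H}(d)$ is supported on $H$, its global sections compute $h^0(H,\II_{Tr_H(X)}(d))$, which gives the claimed bound; the hypothesis $d\geq e$ is needed only to keep the twist $d-e$ nonnegative so that the first term is meaningful. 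The main obstacle, as indicated, is the exactness in the middle of the fundamental sequence: once that ideal-theoretic identity $\II_X\cap\II_H=f\cdot\II_{Res_H(X)}$ is in hand, the remainder is a formal cohomology computation.
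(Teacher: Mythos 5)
Your proof is correct. The paper itself does not prove this lemma---it simply refers the reader to \cite[Section 2]{AH}---and your argument, which builds the exact sequence $0\to \II_{Res_H(X)}(d-e)\xrightarrow{\ \cdot f\ }\II_X(d)\to \II_{Tr_H(X),H}(d)\to 0$ from the local identity $\II_X\cap\II_H=f\cdot(\II_X\colon f)$ and then applies left-exactness of $H^0$, is precisely the standard proof found in that reference (your closing remark is also accurate: the hypothesis $d\geq e$ is not logically needed for the inequality, since for $d<e$ the first term simply vanishes).
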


\vspace{0.2cm}
 The following remark is quite immediate.
\begin{rem}\label{ss'}
Let $X= X_1+\cdots+X_s\subset \PP^n$ be the union of
non-intersecting closed subschemes $X_i$. Let $s'<s$ and
$$ X'= X_1+\cdots+X_{s'}\subset X.$$
\begin{itemize}
\item [(i)]
If $h^1(\II_X(d))= 0$, then $h^1(\II_{X'}(d))=0.$
\item [(ii)]
If  $h^0(\II_{X'}(d))= 0$, then $h^0(\II_{X}(d))=0.$
\end{itemize}
\end{rem}

\vspace{0.2cm}
 A \emph{(fat) point of multiplicity $m$}, or an
\emph{$m$-multiple point}, with support $P\in \PP^n$, denoted $mP$,
is the zero-dimensional subscheme of $\PP^n$ defined  by the ideal
sheaf $(\II_P)^m$, i.e. the $(m-1)^{th}$ infinitesimal neighborhood
of $P$. In particular, if $m= 2$ we shall call the scheme $2P$ a
\emph{ double point} with support $P$, i.e. the first infinitesimal
neighborhood of $P$.

In case $P\in X$ for any smooth variety $X\subset\PP^n$, we will
write $mP|_{X}$ for the $(m-1)^{th}$ infinitesimal neighborhood of
$P$ in $X$, that is the schematic intersection of the $m$-multiple
point $mP$ of $\PP^n$ and $X$ with $(\II_{P,X})^m$ as its ideal
sheaf.

More generally, if $\mathbb{X}= \{P_1,\ldots,P_s\}$ is any set of
distinct points in $\PP^n$  and $m_1,\ldots,m_s$ are positive
integers, then we will denote by $m_1P_1+\cdots+m_sP_s$ the
zero-dimensional subscheme of $\PP^n$  defined by the ideal sheaf
$(\II_{P_1})^{m_1}\cap\ldots\cap (\II_{P_s})^{m_s}$, and we
sometimes refer to it as a \emph{fat point scheme} with support
$\mathbb{X}$.

\vspace{0.2cm}
 We recall a spectacular  result due to Alexander and
Hirschowitz, on the postulation of generic collections of double
points.
\begin{thm}[Alexander-Hirschowitz]\cite{AH}\label{AH th}
Let $n,d\in \NN$, and let $X\subset\PP^n$ be a generic collection of
$s$ double points. Then $X$ has good postulation in degree $d$
except in the following cases:
\begin{itemize}
\item $d=2, ~2\leq s\leq n$;
\item $n=2, ~d=4,~ s=5$;
\item $n=3, ~d=4,~ s=9$;
\item $n=4, ~d=3,~ s=7$;
\item $n=4, ~d=4,~ s=14$.
\end{itemize}
\end{thm}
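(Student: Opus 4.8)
The plan is to prove the two complementary vanishings $h^0(\II_X(d))=0$ (when the expected value is $0$) and $h^1(\II_X(d))=0$ (when it is positive) by a double induction on $n$ and $d$, built on the specialization of double points onto a hyperplane and the refinement known as the Horace differential method. By semicontinuity and Remark \ref{ss'}, it suffices to treat, for each admissible pair $(n,d)$, only the two critical values of $s$ straddling the equality $\binom{n+d}{n}=s(n+1)$: establishing $h^1=0$ for the largest $s$ with $\binom{n+d}{n}\geq s(n+1)$ and $h^0=0$ for the smallest $s$ with $s(n+1)\geq\binom{n+d}{n}$ forces good postulation for \emph{all} $s$, since removing components can only decrease $h^1$ and adding them can only keep $h^0$ at zero.

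First I would set up the inductive step via Castelnuovo's inequality. Fix a hyperplane $H\cong\PP^{n-1}$ and specialize $k$ of the $s$ double points to have support on $H$, leaving the remaining $s-k$ generic. For a double point $2P$ with $P\in H$ one has that $Tr_H(2P)$ is a double point of $H$ (contributing $n$ conditions) while $Res_H(2P)$ is the reduced point $P$ (contributing $1$); for a generic double point $2Q$, the trace $Tr_H(2Q)$ is a simple point of $H$ and $Res_H(2Q)=2Q$. Applying Lemma \ref{cas} with $e=1$ then bounds $h^0(\II_X(d))$ by $h^0(\II_{Res_H(X)}(d-1))+h^0(H,\II_{Tr_H(X),H}(d))$, reducing the problem in degree $d$ on $\PP^n$ to a double/simple point problem in degree $d$ on $\PP^{n-1}$ (the trace) together with one in degree $d-1$ on $\PP^n$ (the residual). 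Choosing $k$ so that, by the inductive hypothesis, both subproblems have good postulation yields the target vanishing upstairs.

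The main obstacle is that the naive count almost never divides evenly: the number $k$ of double points one wishes to lay on $H$ is typically forced to fall between consecutive integers, so neither the pure trace nor the pure residual subproblem comes out balanced. This is precisely where the Horace differential method is indispensable. Rather than specializing whole double points, I would degenerate them tangentially to $H$ along a one-parameter family and pass to the flat limit, invoking the differential Horace lemma. That lemma computes the limit of such a degeneration and allows the $n+1$ conditions of each double point to be distributed between a trace in $H$ and a residual in $\PP^n$ in a flexible way across the whole collection, repairing the failure of integral divisibility in the naive split. Verifying the flatness of the family, controlling the traces and residuals of these degenerate schemes, and carrying out the bookkeeping of how many double points of each specialized type are needed to hit the expected dimension exactly constitute the delicate technical core of the argument.

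Finally I would anchor the induction and isolate the exceptions. The base cases live at small $d$ — notably $d=2$, where a quadric singular at $s\leq n$ generic points is automatically singular along their linear span, which produces the infinite family of exceptions $d=2,\,2\leq s\leq n$ — and at low-dimensional fibres of the induction. The four sporadic exceptions $(n,d,s)\in\{(2,4,5),(3,4,9),(4,3,7),(4,4,14)\}$ reflect genuine geometric degeneracies: they are exactly the defective cases of the secant varieties of Veronese varieties, and so must be detected and excluded by hand. One must then check that the differential Horace reduction never re-creates these strata outside the listed range, after which assembling the induction over $(n,d)$ completes the proof of Theorem \ref{AH th}.
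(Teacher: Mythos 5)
The first thing to say is that the paper does not prove this theorem at all: it is quoted from the literature (note the citation \cite{AH} attached to the statement), and the only content the paper extracts from it is Corollary \ref{ah2}, the $\PP^2$ specialization used in Section \ref{h''}. So there is no internal proof to measure your proposal against; what you have written is a roadmap of the proof that lives in the cited work of Alexander and Hirschowitz, and as a roadmap it names the right ingredients: induction on $(n,d)$, Castelnuovo's inequality via a hyperplane, the differential Horace method to repair the arithmetic when $\binom{n+d}{n}$ does not split evenly, and Terracini's identification of the exceptions with defective secant varieties of Veronese varieties.

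As a proof, however, it has genuine gaps, and they sit exactly where the real difficulty of this theorem lies. First, your induction does not close as stated: once $k$ double points are specialized onto $H$, the residual scheme is a union of $s-k$ double points together with $k$ simple points \emph{lying on $H$}, and the trace is a union of $k$ double points and $s-k$ simple points of $\PP^{n-1}$; neither is a generic collection of double points, so the inductive hypothesis you formulated (good postulation for generic double points) applies to neither subproblem. One must strengthen the statement being proved to cover such mixed, partially constrained collections — this is precisely what Alexander and Hirschowitz do, and it changes the shape of the whole induction. Second, the differential Horace lemma is invoked but never stated, let alone proved; its precise formulation (which limit schemes arise from tangential degeneration, why the family is flat, and why the cohomology of the limit controls that of the generic collection) is the technical heart of the argument, not a detail to be deferred. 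Third, the base cases of the induction — small $d$, and the full classifications in $\PP^2$ and $\PP^3$ — amount to substantial theorems in their own right and are left untouched, as is the verification that the five exceptional cases do not reappear as trace or residual subproblems inside the reduction. Naming these steps is not the same as carrying them out; the known proofs of this result occupy a long series of papers precisely because of them.
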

Since we will apply Alexander-Hirschowitz theorem in the case of
$\PP^2$ frequently in Section \ref{h''}, it is convenient to restate
it as follows.
\begin{cor}\label{ah2}
The scheme $X\subset \PP^2$ consisting of $s$ generic double points
always has good postulation in degree $d$, except for the two cases
$\{s=2, d=2\}$ and $\{s=5, d= 4\}$.
\end{cor}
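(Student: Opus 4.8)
The plan is to obtain the corollary as a direct specialization of the Alexander--Hirschowitz theorem (Theorem \ref{AH th}) to the case $n=2$. Since that theorem is a \emph{complete} classification of the failures of good postulation for generic collections of double points in every $\PP^n$, outside the listed exceptional cases good postulation always holds; thus it suffices to intersect the exceptional list with the condition $n=2$ and check that exactly the two claimed pairs survive. No argument independent of Theorem \ref{AH th} is required.

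Concretely, I would run through the five exceptional families. The family $d=2$, $2\leq s\leq n$ forces $s=2$ once $n=2$, producing the case $\{s=2,\ d=2\}$. The family $n=2$, $d=4$, $s=5$ is already stated for the plane and produces $\{s=5,\ d=4\}$. The three remaining families carry $n=3$ or $n=4$ and hence contribute nothing in $\PP^2$. Therefore the only two exceptions for $\PP^2$ are precisely those in the statement, and good postulation holds for every other pair $(s,d)$.

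As a sanity check I would confirm both exceptions geometrically, recalling that a double point of $\PP^2$ has length $3$. For $\{s=2,\ d=2\}$ we have $h^0(\OO_{\PP^2}(2))=6$ and two double points impose $2\cdot 3=6$ conditions, so the expected value of $h^0(\II_X(2))$ is $0$; but the double line through the two support points is a conic singular at both, forcing $h^0(\II_X(2))\geq 1$. For $\{s=5,\ d=4\}$ we have $h^0(\OO_{\PP^2}(4))=15$ and five double points impose $5\cdot 3=15$ conditions, so again the expected $h^0$ is $0$; yet the doubled conic through the five support points is a quartic singular at all of them, forcing $h^0(\II_X(4))\geq 1$. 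Since the statement is a pure specialization, there is no genuine obstacle: the only care needed is the bookkeeping that matches each exceptional family against $n=2$.
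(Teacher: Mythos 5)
Your proposal is correct and coincides with the paper's own treatment: the paper offers no separate argument for Corollary \ref{ah2}, presenting it as an immediate restatement of Theorem \ref{AH th} specialized to $n=2$, which is exactly your bookkeeping of the exceptional list (your geometric verification of the two exceptions via the doubled line and the doubled conic is a correct, if optional, addition).
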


\vspace{0.1cm}
 If $X$ is a zero-dimensional scheme, we denote by
$\ell(X)$ its length. So in the simple case $X= mP\subset\PP^n$ we
have $\ell(X)= {n+m-1 \choose n}$.

 Even though in this paper we will only use 2-dots, it seemed nice to  give the more general
 definition as follows:
a $\delta$-\emph{dot} is a subscheme of length $\delta$ of a double
point. (Hence a double point in $\PP^n$ is an $(n+1)$-dot.)

From \cite{curv} we know that a generic union of $2$-dots in $\PP^2$
has good postulation. Now the next lemma gives more information on
the postulation problem regarding 2-dots in $\PP^2$. Precisely the
lemma affirms that a generic union  of $2$-dots together with a
multiple point in $\PP^2$ has good postulation, which geometrically
means that a generic collection of 2-dots  imposes the expected
number of  conditions on the linear system of curves passing through
a multiple point.
\begin{lem}\label{dot}
Let $s,m,d \in \NN$, and $d\geq m-1$. Let  the scheme
$X\subset\PP^2$ be a generic union of an $m$-multiple point $mP$ and
$s$ $2$-dots $Z_1,\ldots,Z_s$. Then $X$ has good postulation, i.e.
$$h^0(\II_X(d)) = \max \left \{ {d+2 \choose 2}- {m+1 \choose 2}- 2s, 0 \right \},$$
\end{lem}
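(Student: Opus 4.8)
The plan is to prove the two inequalities separately according to the sign of the expected value. Write $e := \binom{d+2}{2} - \binom{m+1}{2} - 2s$. Since $X$ is zero-dimensional we have $h^2(\II_X(d)) = 0$, hence $\chi(\II_X(d)) = h^0(\II_X(d)) - h^1(\II_X(d)) = e$; the asserted formula is therefore equivalent to $h^1(\II_X(d)) = 0$ when $e \geq 0$ and to $h^0(\II_X(d)) = 0$ when $e \leq 0$. Using Remark \ref{ss'} I would first reduce to the \emph{critical} number of $2$-dots: to prove $h^1 = 0$ I may enlarge $X$ by adding generic $2$-dots (part (i)), and to prove $h^0 = 0$ I may delete generic $2$-dots (part (ii)); in either case I reduce to the value of $s$ for which $|e|$ is as small as possible, so that $e \in \{0,1\}$ in the first regime and $e \in \{-1,0\}$ in the second.

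The core of the argument is an induction on the multiplicity $m$, carried out by the Horace method: I specialize part of $X$ onto a generic line $L \subset \PP^2$ and apply Castelnuovo's inequality (Lemma \ref{cas}) with $H = L$ of degree $1$. For the base case $m = 0$ the scheme is a generic union of $2$-dots, whose good postulation is the content of \cite{curv}. For the inductive step I would move the support $P$ onto $L$ and place $k$ of the $2$-dots on $L$ with tangent direction along $L$. A local computation gives $Tr_L(mP) = mP|_L$ of length $m$ and $Res_L(mP) = (m-1)P$, while each tangent $2$-dot contributes a length-$2$ subscheme of $L$ to the trace and nothing to the residual. Writing $t := \ell(Tr_L(X)) = m + 2k$, the residual is $Res_L(X) = (m-1)P + (s-k)$ generic $2$-dots, a scheme of the same type but of strictly smaller multiplicity, to which the inductive hypothesis applies in degree $d-1$ (note $d-1 \geq (m-1)-1$, so the hypothesis $d \geq m-1$ is preserved). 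The trace lives on $L \cong \PP^1$, where every zero-dimensional scheme has good postulation, so $h^0(L, \II_{Tr_L(X)}(d)) = \max\{d+1-t, 0\}$.

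I would then combine these via Lemma \ref{cas}, which bounds $h^0(\II_X(d))$ by $h^0(\II_{Res_L(X)}(d-1)) + h^0(L, \II_{Tr_L(X)}(d))$. A short computation shows that the expected value of the residual in degree $d-1$ equals $e' = e - (d+1-t)$. In the regime $e \geq 0$ I would choose $k$ so that $t \in [\,d+1-e,\; d+1\,]$; then $t \le d+1$ forces the line term to equal $d+1-t$, while $t \geq d+1-e$ gives $e' \geq 0$, so by induction $h^0(\II_{Res_L(X)}(d-1)) = e'$, and the two contributions sum to exactly $e$. Since always $h^0(\II_X(d)) \geq \chi(\II_X(d)) = e$, this forces $h^0 = e$, i.e. $h^1 = 0$. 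In the regime $e \leq 0$ I would instead choose $k$ so that $t \in [\,d+1,\; d+1-e\,]$; then the line term vanishes and, since $e' \leq 0$, induction gives $h^0(\II_{Res_L(X)}(d-1)) = 0$, whence $h^0(\II_X(d)) = 0$. In both regimes semicontinuity of cohomology \cite{Hart} transfers the vanishing from this special configuration back to the generic $X$.

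The step I expect to be the main obstacle is realizing the required trace length: since $t = m+2k$ has the fixed parity of $m$, the target interval, which after the critical reduction may be a single integer (the case $e = 0$), need not contain a value of the correct parity, and one must also check that there are enough $2$-dots to reach it. I would resolve the parity by allowing, when necessary, one $2$-dot to meet $L$ transversally, which contributes length $1$ to the trace and a single simple point to the residual; the resulting residual is then a generic union of a multiple point, $2$-dots and one simple point, whose postulation is still controlled by \cite{curv} together with Corollary \ref{ah2}. Handling this parity-critical case, together with the boundary sub-cases in which $m + 2s$ is too small to fill the line, is where essentially all the technical work lies; the remaining degenerations and numerology are routine.
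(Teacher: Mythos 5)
Your proposal is correct and is essentially the paper's own argument: the same reduction to the critical number of $2$-dots via Remark \ref{ss'}, the same induction on $m$ with base case $m=0$ from \cite{curv}, the same Horace specialization of $2$-dots onto a line through $P$, and the identical parity fix (one $2$-dot supported on $L$ but transverse to it, contributing a simple point to the residual — this is exactly the paper's Case 2), finished by semicontinuity. The only difference is bookkeeping: the paper fills the line to trace degree exactly $d+1$ so that $L$ becomes a fixed component and $h^0(\II_{\X}(d)) = h^0(\II_{Res_L(\X)}(d-1))$ holds as an equality, whereas you let the trace length range in an interval and squeeze via Castelnuovo's inequality together with $h^0 \geq \chi$; both versions are sound, and even your glossed point (the residual containing an extra generic simple point, which the lemma as stated does not cover) is glossed in the same way by the paper.
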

\begin{proof}

 Let
 $$t= \left \lfloor {{d+2 \choose 2} -  {m+1 \choose 2}}\over {2}\right
 \rfloor; \ \ \ \  u= \left \lceil {{d+2 \choose 2} -  {m+1 \choose 2}}\over {2}\right
 \rceil.$$
 In order to apply  Remark \ref{ss'},  we must  prove the lemma  only for the two cases $s=t$ and $s=u$.

 If ${d+2 \choose 2} -  {m+1 \choose 2}$ is even,   we have $t=u$.
  If ${d+2 \choose 2} -  {m+1 \choose
 2}$ is odd,   we observe that for  $s= t$ the expected value for
 $h^0(\II_X(d))$ is $1$.  So, if we prove that $h^0(\II_X(d))= 1$
 for $s=t$, we then  conclude  that $h^0(\II_X(d))= 0$ for $s= u$, i.e. the
 expected one.
 Therefore it is enough to prove the lemma only for $s=t$.

If $m= 0$, the scheme $X$ is  generic union of $2$-dots, and the
conclusion immediately follows from
 \cite[Proposition 4.2]{curv}.

 Now we assume that $m>0$.  We will proceed by induction on $m$.

 Let $L$ be a line passing through the point $P$.
We consider two cases.

\vspace{0.3cm}
 \textbf{Case 1.}  When $d-m$ is odd. Let  $t'=
\frac{d-m+1}{2}$, (note that $t\geq t'$). Consider the scheme $\X$
obtained from $X$ by specializing
 the $2$-dots $Z_1,\ldots,Z_{t'}$
so that $Z_i\subset L$, $(1\leq i\leq t')$, (the $2$-dots
  $Z_{t'+1},\ldots,Z_t$ remain  generic not lying on $L$).

 Since $\deg(\X\cap L)= m+2t'= d+1$, the line $L$ is a fixed
  component for the curves of degree $d$ containing $\X$, so we get
  \begin{equation}\label{od}
  h^0(\II_{\X}(d))= h^0(\II_{Res_L(\X)}(d-1)),
  \end{equation}
 where  $Res_L(\X)$ is the  union of the $(m-1)$-multiple
 point $(m-1)P$ and  $2$-dots $Z_{t'+1},\ldots,Z_{t}$.
 Thus by the induction hypothesis we have
  \begin{eqnarray*}
 h^0(\II_{Res_L(\X)}(d-1))&=& \max \left \{ {d+1 \choose 2}- {m \choose 2}- 2(t-t'), 0 \right
 \}\\
 &=& \max \left \{ {d+2\choose 2}- {m+1\choose 2}- 2t, 0 \right\},
\end{eqnarray*}
which by the equality (\ref{od}) gives
$$h^0(\II_{\X}(d)) = \max \left \{ {d+2 \choose 2}- {m+1 \choose 2}- 2t, 0 \right \},$$
 and from here, by
semicontinuity of cohomology \cite[III, 12.8]{Hart},  we see that
$$h^0(\II_X(d)) = \max \left \{ {d+2 \choose 2}- {m+1 \choose 2}- 2t, 0 \right \},$$
hence this case is done.

\vspace{0.2cm}
 \textbf{Case 2.} When $d-m$ is even. Let $t'=
\frac{d-m}{2}$, (note that $t\geq t'+1$).
 We specialize the
$2$-dots  $Z_1,\ldots,Z_{t'}$ so that $Z_i\subset L$,  $(1\leq i\leq
t')$;  moreover, we specialize
 $Z_{t'+1}$ in such a way that the support of $Z_{t'+1}$ is contained in the line $L$, but
 $Z_{t'+1}\not\subset L$, which implies that
  $\mathrm{deg}(Z_{t'+1}\cap L)= 1$.
 Let $\X$ be the specialized scheme, (note that the
     $2$-dots
  $Z_{t'+2},\ldots,Z_t$ remain  generic not lying on $L$).

  Since $\deg(\X\cap L)= m+2t'+1= d+1$, the line $L$ is a fixed
  component for the curves of degree $d$ containing $\X$, so  we get
  \begin{equation}\label{ev}
  h^0(\II_{\X}(d))= h^0(\II_{Res_L(\X)}(d-1)),
  \end{equation}
 where, by  noting that $Res_L(Z_{t'+1})$ is a simple point,
 the scheme $Res_L(\X)$ is the generic union of the $(m-1)$-multiple
 point $(m-1)P$,  $2$-dots $Z_{t'+2},\ldots,Z_{t}$ and one
 simple point. Hence  by the induction hypothesis we have
 \begin{eqnarray*}
 h^0(\II_{Res_L(\X)}(d-1))&=& \max \left \{ {d+1 \choose 2}- {m \choose 2}- 2(t-t'-1)-1, 0 \right
 \}\\
 &=& \max \left \{ {d+2\choose 2}- {m+1\choose 2}- 2t, 0 \right\},
\end{eqnarray*}
then by the equality (\ref{ev}) and  the semicontinuity of
cohomology \cite[III, 12.8]{Hart} we get the conclusion
$$h^0(\II_X(d)) = \max \left \{ {d+2 \choose 2}- {m+1 \choose 2}- 2t, 0 \right \},$$
which finishes the proof.
\end{proof}
\section{Main Argument via Degeneration; \\ $(2,s)$-Cone
Configuration}\label{s2} A natural approach to the postulation
problem is to argue by degeneration.
 In view of  the fact that we have the semicontinuity
theorem for cohomology
 groups in a flat family \cite[III, 12.8]{Hart}, one may use the degenerations and the
 semicontinuity theorem in order to be able to better handle
 the
postulation of schemes supported on  generic unions of linear
spaces. Specifically, if one can prove that the property of having
good postulation is satisfied in the special fiber, i.e. the
degenerate scheme, then one may hope to  obtain the same property in
the general fiber, i.e. the original scheme.
\subsection{Definitions and Basic  Constructions}\label{def}
In the celebrated paper \cite{HH} Hartshorne and Hirschowitz
investigated a new degeneration technique to attack the postulation
problem for a generic union of lines.  In fact,  they degenerate two
skew lines in $\PP^3$ in such a way that the resulting scheme
becomes  a ``degenerate conic with an embedded point" (which also
was used in \cite{Hir}).
 Even more generally, one can push this trick of ``adding nilpotents" further,
  to give a degeneration of two skew lines in higher dimensional
  projective spaces $\PP^n,$ $n\geq 3$,   this is what
 the authors introduced in \cite[Definition 2.7 with $m=1$]{CCG1} and called a
  {\em (3-dimensional) sundial}.

 According to the terminology of \cite{HH}, we say that $C$ is a
{\it degenerate conic} if  $C$ is the union of two intersecting
lines $L$ and $M$, so $C=L+M$.

Now we recall the definition of a 3-dimensional sundial or simply a
sundial (see \cite[Definition 3.7]{CCG3} or \cite[Definition 2.7
with $m=1$]{CCG1}).
\begin {defn}\label{sundial}
Let $L$ and $M$ be two intersecting lines in $\mathbb P^n, ~n \geq
3$, and let
 $T\cong \Bbb P^{3}$ be a generic linear space
containing the degenerate conic $L+M$. Let
 $P$ be the singular point of  $L+M$,
i.e. $P = L \cap M$.   We call the scheme $L+M+ 2P|_T$ a {\it
degenerate conic with an embedded point} or a {\it (3-dimensional)
sundial}.
\end {defn}
 One can show a sundial is a flat limit inside $\PP^n$ of a flat
 family whose general fiber is the disjoint union of two lines, i.e.
 a sundial is a degeneration of two generic lines in $\PP^n$, $n\geq 3$. This is the
 content of the following lemma (see \cite[Example 2.1.1]{HH} for the case $n=3$, and
  \cite[Lemma 3.8]{CCG3} or \cite[Lemma 2.5 with $m=1$]{CCG1}  for the general case $n\geq 3$).
\begin{lem}\label{sun}
Let $\XX_1\subset\PP^n$, $n\geq 3$, be the disjoint union of two
lines $L_1$ and $M$. Then there exists a flat family of subschemes
$\XX_{\lambda}\subset \langle \XX_1\rangle\cong \PP^3$, $(\lambda\in
k)$, whose general fiber is
 the union of two skew lines and whose special fiber is the sundial $\XX_0= M+L+2P|_{\langle \XX_1\rangle}$,
 where $L$ is a line and $M\cap L= P$.
\end{lem}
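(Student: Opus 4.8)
The goal is to exhibit a flat family over $\mathbb{A}^1$ (parametrized by $\lambda\in k$) whose general fiber is a pair of skew lines and whose special fiber at $\lambda=0$ is the sundial $M+L+2P|_{\langle\XX_1\rangle}$. The plan is to work entirely inside the generic $\PP^3 = \langle\XX_1\rangle$ spanned by the two skew lines, write down explicit equations for a moving line, and compute the flat limit of the ideal.

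First I would fix affine-friendly coordinates $[x_0:x_1:x_2:x_3]$ on $\langle\XX_1\rangle\cong\PP^3$ so that the stationary line is $M=\{x_0=x_1=0\}$, meeting the plane $\{x_3=0\}$ (say) in the point $P=[0:0:1:0]$, and so that $M$ is defined by the ideal $(x_0,x_1)$. I would then introduce a one-parameter family of lines $L_\lambda$ that, for $\lambda\neq 0$, is disjoint from $M$ but degenerates as $\lambda\to 0$ to a line $L$ meeting $M$ at $P$. A clean choice is to take $L_\lambda$ passing through $P$ in the limit by rotating a skew line into the plane through $P$; concretely one may set $L_\lambda = \{x_2=0,\; x_1=\lambda x_0\}$ (or an equivalent parametrization) so that for $\lambda\neq 0$ the line avoids $M$, while at $\lambda=0$ it becomes $L=\{x_2=0,\;x_1=0\}$ which passes through $P$. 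The disjoint union $\XX_\lambda = M + L_\lambda$ then has ideal $I_\lambda = (x_0,x_1)\cap I_{L_\lambda}$, and this defines a family over the $\lambda$-line.

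The key computation is the flat limit of $I_\lambda$ as $\lambda\to 0$. Flatness over the smooth affine curve $\mathbb{A}^1_\lambda$ is automatic once one takes the limiting ideal to be the scheme-theoretic closure of the total family in $\PP^3\times\mathbb{A}^1$ and checks that the fibers have constant Hilbert polynomial; since the general fiber is two lines with Hilbert polynomial $2t+2$, the special fiber must also have degree $2$ and arithmetic genus $-1$, which is exactly the numerical signature of a degenerate conic with one embedded point. The heart of the matter is to verify that the naive limit ideal $I_0=(x_0,x_1)\cap(x_1,x_2)$ computed by setting $\lambda=0$ is \emph{not} flat (the two lines $M$ and $L$ now meet at $P$, dropping the length), and that the correct flat limit acquires an embedded component at $P$. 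I would compute this by saturating or by taking the ideal generated by the leading terms of a generating set of $I_\lambda$ expanded in powers of $\lambda$: the element $x_0 x_2 \in I_\lambda$ (being in $(x_0,x_1)$ and, modulo $\lambda$, detecting the crossing) contributes a limit generator whose vanishing, together with $x_0 x_1, x_1 x_2, x_1^2$, cuts out precisely $M+L$ with the first infinitesimal neighborhood of $P$ in the ambient $\PP^3$, i.e. $2P|_{\langle\XX_1\rangle}$.

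The main obstacle I expect is this last step: carrying out the ideal-theoretic limit carefully enough to see the embedded point appear with the right scheme structure, rather than merely matching Hilbert polynomials. The cleanest route is to identify the flat limit with the scheme-theoretic image (closure) of the family and then verify \emph{directly} that $M+L+2P|_{\langle\XX_1\rangle}$ has the same Hilbert function as two skew lines in every degree $d$; since both are subschemes of $\PP^3$ of degree $2$ and since $h^0(\OO_{\XX_0}(d))=h^0(\OO_{\XX_1}(d))$ can be checked by a short length count (the embedded point supplies exactly the one unit of length lost when the two lines are forced to meet), the equality of Hilbert polynomials together with the inclusion of $M+L+2P|_{\langle\XX_1\rangle}$ in the flat limit forces them to coincide. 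This reduces the whole lemma to one explicit local computation at $P$ plus a semicontinuity-free flatness argument via closure, avoiding any delicate Gröbner-basis bookkeeping.
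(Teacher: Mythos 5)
Your overall plan (explicit coordinates, limit of ideals, flatness via constant Hilbert polynomial) has the right shape --- it is essentially how the sources the paper cites for Lemma \ref{sun} (\cite[Example 2.1.1]{HH}, \cite[Lemma 2.5]{CCG1}) and the paper's own analogous computations in \S \ref{ext} and Lemma \ref{cone} proceed --- but your execution fails at the decisive step. With $M=\{x_0=x_1=0\}$ and $L_{\la}=\{x_2=0,\ x_1=\la x_0\}$, none of the four linear forms $x_0,\,x_1,\,x_2,\,x_1-\la x_0$ involves $x_3$, so they have the common zero $[0:0:0:1]$: the lines $M$ and $L_{\la}$ meet at that point for \emph{every} $\la$. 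Your general fiber is therefore a degenerate conic (Hilbert polynomial $2t+1$), not a pair of skew lines, and the flat limit of your family can be computed exactly: $I_{\la}=(x_0,x_1)\cap(x_2,x_1-\la x_0)=(x_1-\la x_0,\ x_0x_2)$, whose fiber at $\la=0$ is $(x_1,x_0x_2)$ --- the plain degenerate conic, with no embedded point whatsoever. Symptomatically, your named point $P=[0:0:1:0]$ does not even lie on $L_0=\{x_1=x_2=0\}$, and the generators you list, $x_0x_2,x_0x_1,x_1x_2,x_1^2$, cut out a sundial supported at $[0:0:0:1]$, not at $P$; neither scheme is the limit of your family. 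A correct family must make the four forms independent for $\la\neq 0$: in the paper's coordinates $t,x,y,z$ take $I_M=(x,z)$ and $I_{L_{\la}}=(y,\,z-\la t)$. Then for $\la\neq 0$ the lines are skew, so $I_{\la}=I_M\cdot I_{L_{\la}}=(xy,\ xz-\la tx,\ yz,\ z^2-\la tz)$; setting $\la=0$ gives $(xy,xz,yz,z^2)=I_M\cap I_{L_0}\cap I_P^2$ with $P=[1:0:0:0]=M\cap L_0$, and constancy of the Hilbert polynomial $2t+2$ across all fibers gives flatness of the total family.

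The second genuine gap is that your fallback argument is circular. What comes for free from the geometry of a degenerating family is only that the \emph{reduced} conic $M+L$ sits inside the flat limit $\XX_0$; the inclusion of the full sundial $M+L+2P|_{\langle \XX_1\rangle}$ in $\XX_0$ is precisely what the lemma asserts and cannot be deduced from a length count, because ``contains $M+L$ and has Hilbert polynomial $2t+2$'' does not characterize the sundial. Concretely, the ideal $(z,\,x^2y,\,xy^2)$ --- the conic $\{z=xy=0\}$ with an embedded point at the node whose structure lies \emph{inside} the plane $z=0$ --- is saturated, contains the conic, and has Hilbert polynomial $2t+2$, yet differs from the sundial ideal $(xy,xz,yz,z^2)$; more generally every ideal $(x,y,z,t)\cdot(xy,z)+(\alpha xy+\beta z)$, $[\alpha:\beta]\in\PP^1$, has these properties, and which one arises as the flat limit depends on how the moving line approaches (lines colliding inside a common plane yield the planar structure; genuinely skew lines yield the sundial). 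So the ideal-theoretic computation cannot be bypassed. The clean way to close the argument, once the family above is in place, is the reverse of the inclusion you invoked: every limit as $\la\to 0$ of elements of $I_{\la}$ lies in $I_{\XX_0}$, so $I_{\XX_0}\supseteq(xy,xz,yz,z^2)$, i.e.\ $\XX_0$ is \emph{contained in} the sundial; since both have Hilbert polynomial $2t+2$ and the sundial's ideal is saturated, they coincide.
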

\begin{defn}\label{DD}
A \emph{$(2,m;n)$-point}  is a zero-dimensional scheme in $\PP^n$,
$n\geq 3$,  with support at one point $P$, and whose ideal sheaf is
of type $\II_P^m+\II_H^2$, where $H\subset\PP^n$ is a plane
containing $P$. We denote it by $\DD_{H,m}(P)$,   or $\DD_m(P)$ if
no confusion arises.
 So $\DD_{H,m}(P)$ is  the trace of $mP$ on the double plane $2H$,
$$\DD_{H,m}(P)= mP\cap 2H.$$
\end{defn}
If we take the residual and the trace of the $(2,m;n)$-point
$\DD_{H,m}(P)$ with respect to $H$ we observe that the residual
scheme $Res_H(\DD_{H,m}(P))$ is  the subscheme $(m-1)P|_H$ of
$\PP^n$, that is $(m-2)^{th}$ infinitesimal neighborhood of $P$ in
$H$,  and the trace scheme $Tr_H(\DD_{H,m}(P))$ is  the $m$-multiple
point $mP$ of $H$.
\begin{defn}\label{type}
We say that $\mathcal{C}$   is a \emph{cone configuration of type
$s$}, $s\geq 3$,  if $\mathcal{C}$ is the union of $s$ intersecting
lines $L_1,\ldots,L_s$ passing through a single point $P$, so
$\mathcal{C}= L_1+\cdots+L_s$ and $L_1\cap\ldots\cap L_s= P$.
\end{defn}
With the help of Definitions \ref{DD} and \ref{type},  we make the
following definition, which is essential to our degeneration
approach and is central to the proof of our main theorem  in Section
\ref{pr}.
\begin{defn}\label{2scone}
Let $L_1,\ldots,L_s$ be $s$  lines in $\PP^n$ lying on a plane and
 intersecting in a single point $P$, i.e.
 $L_1\cap\ldots\cap L_s= P$, and $n,s\geq 3$.
  Let $H\cong \PP^2$ be the plane
containing the cone configuration $L_1+\cdots+L_s$.
 We call the scheme
 $L_1+\cdots+L_s+\DD_{H,s}(P)$ a \emph{cone configuration of type $s$ with a $(2,s;n)$-point}
 or  simply a  \emph{$(2,s)$-cone configuration}.
\end{defn}
Then for the $(2,s)$-cone configuration
$L_1+\cdots+L_s+\DD_{H,s}(P)$ the residual with respect to $H$ will
be   $(s-1)P|_H$, while the trace with respect to $H$ will be the
cone configuration $L_1+\cdots+L_s$.

 Our interest in  cone configurations, particularly $(2,s)$-cone
 configurations,
 arises from the important fact that
 these constructions  are  occurring as resulting schemes of
 specific degenerations of disjoint
 lines; however   not in a straightforward   way but rather in
    a   less obvious and subtle way.
   This  feature of $(2,s)$-cone configurations is  interesting and surprising  in its own and
plays a key
   role in this paper.
\begin{note}
Let $L_1,\ldots,L_s$ be $s\geq 2$ lines in $\PP^2$ such that no
three lines meet in a point.  The set  of ${s \choose 2}$ points
which are the pairwise intersections of the lines $L_i$
 is called a \emph{star
configuration} of points. We will also call the scheme
$L_1+\cdots+L_s$   a \emph{star configuration} of lines.
\end{note}
\subsection{An Example: The Case of (2,3)-Cone
Configuration}\label{ext}
 If we consider in $\PP^3$ a flat family of
subschemes whose general scheme  is the disjoint union of three
lines, while    the three lines meet at  one single  point  in a
special fiber, then  the special scheme
 consists of three intersecting lines together with a $(2,3)$-point,
 i.e. a $(2,3)$-cone configuration. That is a remarkable result which
we verify in this example.

 Let $L_1,L_2,L_3$ be three disjoint lines
in $\PP^3$, and let $H$ be a fixed  plane containing the line $L_1$.
 First We degenerate  the  lines $L_2,L_3$ in such a way that they together with $L_1$  become
$3$ lines in  a star configuration in $H$. We will again denote
these specialized lines by $L_2,L_3$, moreover,  denote by $P, Q, R$
the three intersection points of the $3$ lines $L_1,L_2,L_3$, namely
$$P= L_1\cap L_2;\ \ \ \  Q= L_1\cap L_3;\ \ \ \  R= L_2\cap L_3.$$
 So according to Lemma \ref{sun} we get the star configuration $L_1+L_2+L_3$ in $H$
together with three double points, i.e. the specialized scheme is
$$L_1+L_2+L_3+2P+2Q+2R\subset\PP^3,$$
which we call $\XX_1$ (see Figure \ref{fig1}).

Now for the calculation. Choose coordinates $t,x,y,z$ on $\PP^3$. We
may assume that $P=[1:0:0:0];\ Q=[1:1:0:0]; \ R=[1:0:1:0]$, and  the
plane $H$ is defined by the equation $z=0$ (we can always choose
coordinates so that this is the case). This implies that
\begin{displaymath}
  \left\{
\begin{array}{lll}
I_P=(x,y,z); \\
I_Q=(x-t,y,z); \\
I_R=(y-t,x,z).
\end{array}
\right.
 \hspace{1cm}
 \left\{
\begin{array}{lll}
I_{L_1}=(y,z); \\
I_{L_2}=(x,z); \\
I_{L_3}=(x+y-t,z).
\end{array}
\right.
\end{displaymath}

 Up to now, we found that the scheme $\XX_1= L_1+L_2+L_3+2P+2Q+2R$,
 which is a star configuration of $3$ lines having the double
 structure at their intersection points,
 is a degeneration of three disjoint lines in $\PP^3$.
Next  we want to make a further degeneration, precisely the
degeneration of $\XX_1$ in which the points $Q=[1:1:0:0]$ and
$R=[1:0:1:0]$ move to the point $P=[1:0:0:0]$. Now what is the
resulting scheme of this degeneration? To answer this, let the line
$L_3$ move in  such a way that the two points $Q$ and $R$ approach
the point $P$.
 We will show that this movement occurs in a flat family
 $\{\XX_{\la}\}$  and the
  limiting scheme $\XX_0$  is a cone configuration
of type $3$ union the $(2,3)$-point $3P\cap 2H$ (Figure \ref{fig1}).
 Then  we can
realize  the limiting scheme as the scheme formed by $L_1+L_2+2P$
and the limit of $L_3+2Q+2R$ along with the aforementioned
direction.

We will now calculate the flat family  $\{\XX_{\la}\}_{\la\in k}$
just described, whereas    $\XX_{\la}$ is the scheme
$$L_1+L_2+L_{3,\la}+2P+2Q_{\la}+2R_{\la},$$
 where  the  two points $Q_{\la}=[1:\la:0:0]$ and  $R_{\la}=[1:0:\la:0]$
  move to the point $P=[1:0:0:0]$, as well as the line joining $L_{3,\la}$,
when $\la$ tends to $0$, that is
\begin{displaymath}
\left\{
\begin{array}{lll}
I_{Q_{\la}}=(x-\la t,y,z); \\
I_{R_{\la}}=(y-\la t,x,z); \\
I_{L_{3,\la}}=(x+y-\la t,z).
\end{array}
\right.
\end{displaymath}

Then the homogeneous ideal of $\XX_{\la}$ is
\begin{eqnarray*}
I_{\XX_{\la}}&=&(y,z)\cap(x,z)\cap(x+y-\la t,z)\cap \\
& &(x,y,z)^2\cap(x-\la t,y,z)^2\cap(y-\la t,x,z)^2\\
&=& (x^2y+xy^2-\la txy,z)\cap \\
& & (x,y,z)^2\cap(x-\la t,y,z)^2\cap(y-\la t,x,z)^2.
 \end{eqnarray*}
 A straightforward computation yields
 $$(x,y,z)^2\cap(x-\la t,y,z)^2\cap(y-\la t,x,z)^2 =$$
 $$(x^2y^2,xyz,x^2z-\la txz,y^2z-\la tyz,x^2y+xy^2-\la txy,x^4-\la tx^3+\la^2t^2x^2,y^4-\la
 ty^3+\la^2t^2y^2,z^2),$$
 and from here we get
\begin{eqnarray*}
I_{\XX_{\la}}&=&(x^2y+xy^2-\la txy) +\\
& & \left[(z)\cap (x^2y^2,xyz,x^2z-\la txz,y^2z-\la tyz,x^4-\la
tx^3+\la^2t^2x^2,y^4-\la
 ty^3+\la^2t^2y^2,z^2)\right]\\
 &=&(x^2y+xy^2-\la txy) +\\
& & (xyz,x^2z-\la txz,y^2z-\la tyz,x^4z-\la
tx^3z+\la^2t^2x^2z,y^4z-\la ty^3z+\la^2t^2y^2z,z^2).
 \end{eqnarray*}
 From this, setting $\la=0$, we obtain the ideal of $\XX_0$, which
 is
 $$I_{\XX_0}= (x^2y+xy^2,xyz,x^2z,y^2z,z^2).$$
On the other hand the scheme $L_1+L_2+L_{3,0}+\DD_{H,3}(P)$,
recalling that $\DD_{H,3}(P)= 3P\cap 2H$, is defined by the ideal
\begin{eqnarray*}
 I_{L_1}\cap I_{L_2}\cap
 I_{L_{3,0}}\cap[I^3_P+I^2_H]&=&(y,z)\cap(x,z)\cap(x+y,z)\cap[(x,y,z)^3+(z^2)]\\
 &=&
(x^2y+xy^2,z)\cap(x^3,y^3,x^2y,xy^2,x^2z,y^2z,xyz,z^2)\\
&=&(x^2y+xy^2,x^2z,y^2z,xyz,z^2),
\end{eqnarray*}
which is exactly $I_{\XX_0}$. Thus we see that the limiting scheme
$\XX_0$ is the $(2,3)$- cone configuration
$L_1+L_2+L_{3,0}+\DD_{H,3}(P)$, as we desired (see Figure
\ref{fig1}).

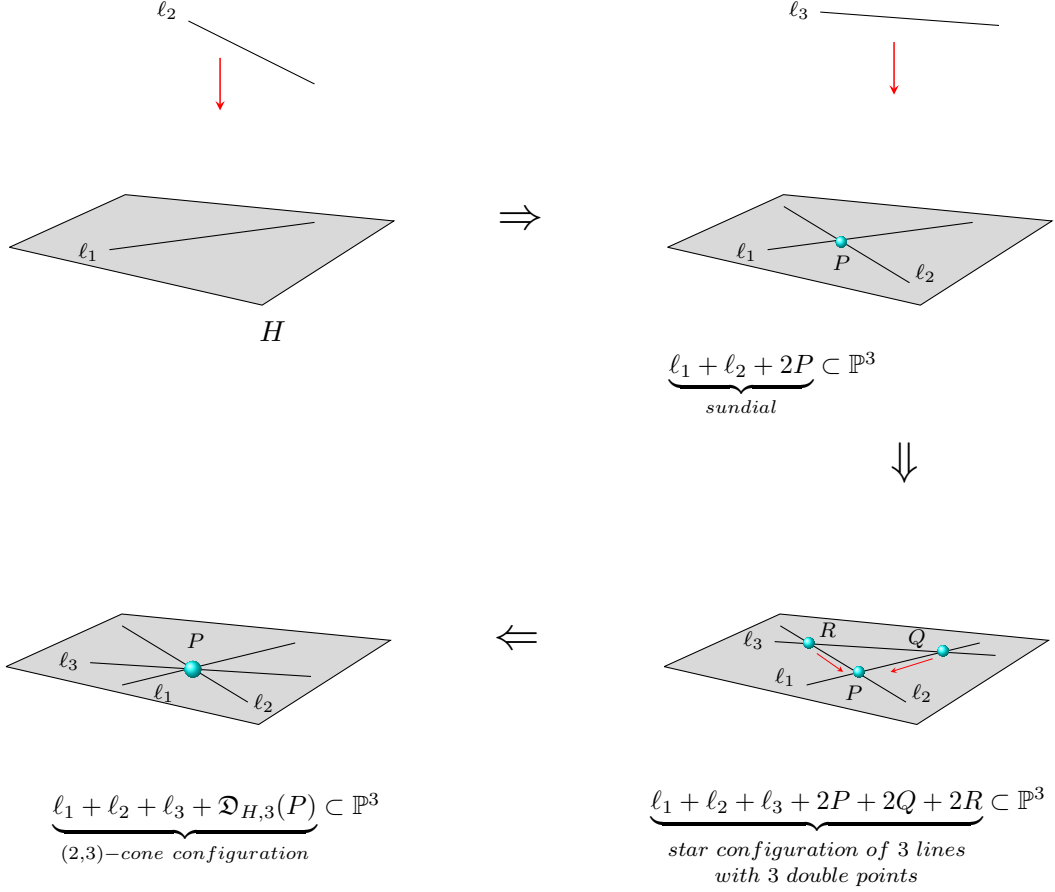
\begin{figure}[h]
\begin{tikzpicture}[scale=0.7,>=stealth]
 \filldraw[fill=gray!30] (0,1.1) -- (4.8,0) -- (7.3,1.6) -- (2.2,2.1) -- (0,1.1);
  \node (A1) at (1.5,1) {$\scriptstyle \ell_1$};
 \node (A2) at (6,1.6)  {$\scriptstyle $} ;
  \draw (A1) -- (A2);
 \node (B1) at (3,5.6) {$\scriptstyle \ell_2$};
 \node (B2) at (6,4.1) {$\scriptstyle $};
 \draw (B1) -- (B2);
 \draw(5,-0.5) node{\small{$H$}};
 \draw [red] [->, line width=0.02cm] (4,4.7) -- (4,3.7);
 \draw [black] (9.7,1.6) node{\Large{$\Rightarrow$}};
  \filldraw[fill=gray!30] (12.5,1.1) -- (17.3,0) -- (19.8,1.6) -- (14.7,2.1) -- (12.5,1.1);
  \node (A'1) at (14,1) {$\scriptstyle \ell_1$};
 \node (A'2) at (18.5,1.6)  {$\scriptstyle $} ;
  \draw (A'1) -- (A'2);
 \node (B'1) at (14.5,2) {$\scriptstyle $};
 \node (B'2) at (17.3,0.3) {$\scriptstyle $};
  \draw (17.4,0.6) node{\tiny{$\ell_2$}};
 \draw (B'1) -- (B'2);
  \draw (15.8,0.8) node{\tiny{$P$}};
 \shade[ball color=cyan] (15.8,1.2) circle (3pt);
   \node (C1) at (15,5.6) {$\scriptstyle\ell_3 $};
 \node (C2) at (19,5.3) {$\scriptstyle $};
  \draw (C1) -- (C2);
  \draw [red] [->, line width=0.02cm] (16.8,5) -- (16.8,4);
  \draw (14.5,-1.5) node{\Small{$\underbrace{\ell_1+\ell_2+2P}_{sundial}\subset\PP^3$}};
  \draw [black] (17,-3) node{\Large{$\Downarrow$}};
   \end{tikzpicture}

   \vspace{1.5cm}
\begin{tikzpicture}[scale=0.7,>=stealth]
 \filldraw[fill=gray!30] (12.5,1.1) -- (17.3,0) -- (19.8,1.6) -- (14.7,2.1) -- (12.5,1.1);
 \node (A'1) at (15,0.7) {$\scriptstyle $};
 \node (A'2) at (18.7,1.6)  {$\scriptstyle $} ;
 \draw (A'1) -- (A'2);
 \node (B'1) at (14.5,2) {$\scriptstyle $};
 \node (B'2) at (17.3,0.3) {$\scriptstyle $};
 \draw (17.4,0.6) node{\tiny{$\ell_2$}};
 \draw (14.8,0.9) node{\tiny{$\ell_1$}};
 \draw (B'1) -- (B'2);
 \node (C'1) at (14.2,1.6) {$\scriptstyle \ell_3$};
 \node (C'2) at (19,1.3) {$\scriptstyle $};
 \draw (C'1) -- (C'2);
  \draw (16.1,0.6) node{\tiny{$P$}};
 \shade[ball color=cyan] (16.2,1) circle (3pt);
 \draw (15.6,1.8) node {\tiny{$R$}};
 \shade[ball color=cyan] (15.25,1.55) circle (3pt);
  \draw (17.3,1.62) node {\tiny{$Q$}};
 \shade[ball color=cyan] (17.8,1.4) circle (3pt);
  \draw [red] [->, line width=0.001cm] (17.6,1.25) -- (16.8,1);
  \draw [red] [->, line width=0.001cm] (15.4,1.35) -- (15.9,1);
 \draw (16,-2.2) node{\Small{$\underbrace{\ell_1+\ell_2+\ell_3+2P+2Q+2R}_{\tiny{\begin{array}{c}
                                                                            star~ configuration~ of~ 3~ lines \\
                                                                            with~ 3~ double~ points
                                                                          \end{array}}}
   \subset\PP^3$}};
\draw [black] (9.7,1.6) node{\Large{$\Leftarrow$}};
 \filldraw[fill=gray!30] (0,1.1) -- (4.8,0) -- (7.3,1.6) -- (2.2,2.1) -- (0,1.1);
 \node (A''1) at (2,0.7) {$\scriptstyle $};
 \node (A''2) at (5.7,1.6)  {$\scriptstyle $} ;
 \draw (A''1) -- (A''2);
 \node (B''1) at (2,2) {$\scriptstyle $};
 \node (B''2) at (4.8,0.3) {$\scriptstyle $};
 \draw (B''1) -- (B''2);
  \node (C''1) at (1.2,1.2) {$\scriptstyle \ell_3$};
 \node (C''2) at (6,0.9) {$\scriptstyle $};
 \draw (C''1) -- (C''2);
  \draw (3,0.6) node{\tiny{$\ell_1$}};
 \draw (4.9,0.4) node{\tiny{$\ell_2$}};
  \draw (3.6,1.6) node{\tiny{$P$}};
 \shade[ball color=cyan] (3.55,1.05) circle (4.5pt);
  \draw (4,-2) node{\Small{$\underbrace{\ell_1+\ell_2+\ell_3+\mathfrak{D}_{H,3}(P)}_{(2,3)-cone~ configuration}\subset\PP^3$}};
 \end{tikzpicture}
 \caption{The degeneration process to get a $(2,3)$-cone configuration}
 \label{fig1}
 \end{figure}

 Summing up, this example illustrates that  one can
degenerate $3$ disjoint lines in such a way that the resulting
scheme becomes a $(2,3)$-cone configuration.  Actually, we first
degenerate $3$ disjoint lines to
 a scheme of type $\XX_1=
L_1+L_2+L_3+2P+2Q+2R$, next we construct a flat family
$\{\XX_{\la}\}_{\la\in k}$ in such a way that  whose general fiber
$\XX_{\la}= L_1+L_2+L_{3,\la}+2P+2Q_{\la}+2R_{\la}$ approaches  the
$(2,3)$-cone configuration  $\XX_0= L_1+L_2+L_{3,0}+\DD_{H,3}(P)$
when $\la\rightarrow 0$.
 This means  that a $(2,3)$-cone
configuration can be viewed as a degeneration of a star
configuration of $3$ lines with $3$ double points, consequently, as
a degeneration of $3$ disjoint lines.

In addition, we easily find  that
\begin{eqnarray*}
I_{\XX_0}:(z)&=& (x^2y+xy^2,xyz,x^2z,y^2z,z^2):(z)\\
&=& (x^2,y^2,xy,z),
\end{eqnarray*}
that is $[(x,y,z)^2+(z)]$, i.e. the ideal of $2P|_H$, hence
$$Res_H(\XX_0)= 2P|_H.$$
Also,
\begin{eqnarray*}
I_{\XX_0}+(z)&=& (x^2y+xy^2,xyz,x^2z,y^2z,z^2)+(z)\\
&=& (x^2y+xy^2,z),
\end{eqnarray*}
that is $[(y,z)\cap(x,z)\cap(x+y,z)]$, i.e. the ideal of
$L_1+L_2+L_{3,0}$, so
$$Tr_H(\XX_0)= L_1+L_2+L_{3,0}.$$

\vspace{0.2cm}
\subsection{A Degeneration Inductive Method:
 How (2,s)-Cone Configurations
Appear  as  Degenerations of $s$ Skew Lines?}\label{de2s}
 One can generalize the result of  \S \ref{ext}, which dealt with
  the degeneration of $3$ disjoint lines
 exclusively, so as to also deal with the degeneration
     of $s$ disjoint lines, provided  that
 a disjoint union of $s$ lines can be degenerated
  to a $(2,s)$-cone configuration.
 In fact  our method appearing in \S \ref{ext}
   can be successfully  extended to the general case.
 Suppose that $L_1,\ldots,L_s$ be $s$ disjoint lines in $\PP^3$ and
 that $H$ be a fixed plane containing  $L_1$.
 (Note that by abuse of notation if we
 specialize some line  $L_i$, we will again denote it by $L_i$ in the sequel).
 To begin,  we  specialize the line $L_2$ into $H$,
  so that we have the sundial $ L_1+L_2+2P$.
  Next  we proceed in $s-2$ steps by an inductive argument
    based on adding one line at each inductive step,
  specializing it to lie on $H$,   and finally degenerating
   it in an effective way for the purpose of getting desired cone configuration.
 Indeed, starting with the sundial  $ L_1+L_2+2P$,
  we can lift step by step,
  to get the $(2,i)$-cone configuration
  $L_1+\cdots+L_i+\DD_{H,i}(P)$,
  for each $i$, $3\leq i\leq s$.
 Let us  explain the process in details.
 The initial step is exactly  the same as   \S \ref{ext}, which
  consists of specializing the line $L_3$ into $H$
 that  meets  $L_1,L_2$  in 2 distinct points $P_{1,3}= L_1\cap L_3, P_{2,3}= L_2\cap
 L_3$,
 and  moving  $L_3$ so that
 $P_{1,3},P_{2,3}$ approach  the point  $P$.
 Hence  what we get  in  this step is the  $(2,3)$-cone
 configuration $L_1+L_2+L_3+\DD_{H,3}(P)$, as described in \S \ref{ext}.
 In the second step,
 first specialize the line $L_4$ into $H$ so that this line  meets  each of the previous
 lines $L_1,L_2,L_3$  in distinct points $P_{1,4}= L_1\cap L_4$,
 $P_{2,4}= L_2\cap L_4$ and $P_{3,4}= L_3\cap L_4$,
 where,  in view of  Lemma \ref{sun},  the specialized scheme is
 obviously
 $$[L_1+L_2+L_3+\DD_{H,3}(P)]+[L_4+2P_{1,4}+2P_{2,4}+2P_{3,4}].$$
  Then
 let the  line $L_4$ move in such a way that the points
 $P_{1,4},P_{2,4},P_{3,4}$ approach the point $P$,
   so, using the same reasoning as the previous step  (for which we refer to Lemma \ref{cone}),
   we get the $(2,4)$-cone configuration
 $L_1+L_2+L_3+L_4+\DD_{H,4}(P)$.
 Continuing in this way, we arrive at {\large{\textbf{step i}}},  $1\leq i\leq s-2$,  which can be
 performed  as follows:
\begin{itemize}
\item specialize the line $L_{i+2}$ into $H$
so that this line  meets each of the previous lines
 $L_1,\ldots,L_{i+1}$ in  distinct points
 $$P_{1,i+2}=L_1\cap L_{i+2}; \ldots; P_{i+1,i+2}=L_{i+1}\cap L_{i+2};$$
 \item  observe that, in view of Lemma \ref{sun},   the outcome of this specialization is
 $$[L_1+\cdots+L_{i+1}+\DD_{H,i+1}(P)]+[L_{i+2}+2P_{1,i+2}+\cdots+2P_{i+1,i+2}];$$
 \item  move  the  specialized
   line  $L_{i+2}$ in such a way
that the $(i+1)$  points $P_{1,i+2},\ldots,P_{i+1,i+2}$ approach the
point $P$;
\item  find  that
the resulting scheme of this degeneration  is  the
 $(2,i+2)$-cone configuration $$L_1+\cdots+L_{i+2}+\DD_{H,i+2}(P).$$
\end{itemize}
 The appealing fact of the inductive procedure is that
 the way one checks the resulting scheme  in all the  steps is the
same (see Lemma \ref{cone} for a detailed proof).
 Eventually  we  see that in the last step the
 resulting degenerated scheme
  is  the  $(2,s)$-cone configuration
  $$L_1+\cdots+L_s+\DD_{H,s}(P),$$
 that is what we expected.

The above discussion leads  to  the following substantial
improvement of the conclusion  of   \S \ref{ext}, which  asserts
that a $(2,s)$-cone configuration is a flat limit  of a flat
 family whose general fiber is the disjoint union of $s$ lines.
\begin{lem}\label{cone}
Let $\XX\subset\PP^3$  be the disjoint union of $s$ lines
$L_1,\ldots,L_s$, and $s\geq 3$. Then there exists a flat family of
subschemes $\XX_{\lambda}\subset\PP^3$, $(\lambda\in k)$, whose
general fiber is
 the union of $s$ disjoint lines and whose special fiber is the
 $(2,s)$-cone configuration
$\XX_0= L_1+L'_2+\cdots+L'_s+\DD_{H,s}(P)$,
 where  $H\cong\PP^2$ is a generic plane containing the line $L_1$, and
  $L_2',\ldots,L_s'$ are lines in $H$ passing through a point $P$,
 such that  $L_1\cap L_2'\cap\ldots\cap L_s'= P$.
\end{lem}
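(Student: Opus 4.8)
The plan is to prove Lemma \ref{cone} by induction on $s$, realizing the $(2,s)$-cone configuration as the special fiber of a flat family obtained by adding and then collapsing one line at a time, exactly as sketched in \S\ref{de2s}. The base case $s=3$ is already settled in \S\ref{ext}, where an explicit ideal computation shows that the family $\XX_\lambda = L_1+L_2+L_{3,\lambda}+2P+2Q_\lambda+2R_\lambda$ degenerates to $L_1+L_2+L_{3,0}+\DD_{H,3}(P)$, and this family is flat because the Hilbert polynomial is constant in $\lambda$ (the general fiber is a union of three lines plus three double points, and the computation of $I_{\XX_0}$ confirms the special fiber has the same length/degree data). Let me think about how the inductive step should go.

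For the inductive step, **I would assume the statement holds for $s-1$** and build the family for $s$ in two stages. First, by Lemma \ref{sun} applied to the disjoint line $L_s$ against the already-degenerated $(2,s-1)$-cone configuration $\mathcal{C}_{s-1} = L_1+L_2'+\cdots+L_{s-1}'+\DD_{H,s-1}(P)$, I specialize $L_s$ into $H$ so that it meets each $L_i'$ in a distinct point $P_{i,s} = L_i' \cap L_s$ for $1 \le i \le s-1$; this produces the scheme $\mathcal{C}_{s-1} + L_s + 2P_{1,s} + \cdots + 2P_{s-1,s}$, where the $(s-1)$ double points arise because each transverse meeting of a pair of coplanar lines carries an embedded double point (again Lemma \ref{sun}). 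Second, I construct a one-parameter family $\{\XX_\lambda\}$ in which $L_s$ rotates about a point so that all $s-1$ intersection points $P_{i,s}$ simultaneously approach $P$ as $\lambda \to 0$. The core claim is that the flat limit of this family is precisely $L_1+L_2'+\cdots+L_s'+\DD_{H,s}(P)$, i.e.\ the multiplicity of the cone point jumps from $s-1$ to $s$ in the double plane $2H$ while the traces on $H$ acquire one more line through $P$.

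**The main obstacle — and the technical heart of the argument — is verifying that this limit is exactly the $(2,s)$-point $\DD_{H,s}(P)$ and not some other scheme of the same length.** The cleanest way to control the limit is to pass through $H$ via the residual/trace exact sequence and Castelnuovo's inequality (Lemma \ref{cas}), using the identities recorded after Definition \ref{DD}: $\mathrm{Res}_H(\DD_{H,s}(P)) = (s-1)P|_H$ and $\mathrm{Tr}_H(\DD_{H,s}(P)) = sP$. Concretely, for the family $\XX_\lambda$ one checks that $\mathrm{Res}_H(\XX_\lambda)$ is (for general $\lambda$) a scheme of lines-and-points degenerating to $(s-1)P|_H$, while $\mathrm{Tr}_H(\XX_\lambda)$ degenerates to the cone configuration $L_1+\cdots+L_s'$ together with the $s$-fold point $sP$ on $H$. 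Since both the residual and the trace have the correct flat limits, and since the ideal $\II_{\XX_0}$ is recovered from $\II_{H} \cdot \II_{\mathrm{Res}_H(\XX_0)}$ together with $\II_{\mathrm{Tr}_H(\XX_0),H}$, the limit scheme is forced to be the claimed $(2,s)$-cone configuration.

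For flatness, **I would confirm that the Hilbert polynomial is constant across the family**: each fiber $\XX_\lambda$ has the same arithmetic genus and degree because specialization-then-collapse preserves both the degree (the number of lines stays $s$ and the length of the zero-dimensional part is conserved as the $2P_{i,s}$ fuse into the single $(2,s)$-point, whose length equals $\ell\big((s-1)P|_H\big) + \ell(sP)$) and the Hilbert polynomial. A fully rigorous version of this inductive step could be made explicit by a local ideal computation generalizing the $s=3$ calculation of \S\ref{ext} — choosing coordinates with $P = [1{:}0{:}\cdots{:}0]$, $H = \{z=0\}$, and writing $L_{i,\lambda}'$ and the points $P_{i,s}(\lambda)$ as functions of $\lambda$ — but the structural residual/trace argument above is what makes the general case tractable without grinding through the explicit elimination, which grows rapidly with $s$.
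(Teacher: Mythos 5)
Your proposal follows the same road map as the paper's own proof: induction on $s$ with \S\ref{ext} as base case, the two--stage inductive step (specialize $L_s$ into $H$ so that it meets the previous lines in $s-1$ distinct points carrying embedded double points by Lemma \ref{sun}, then let those points collapse to $P$), and an identification of the limit through its residual and trace with respect to $H$, which is exactly the paper's ``alternative proof''. The difference is that the paper carries all of this out on explicit ideals: $I_{\XX_\lambda}$ is written down in coordinates as an intersection of ideals depending polynomially on $\lambda$, the intersection is computed (with CoCoA for general $s$), and the limit, its colon $I_{\XX_\lambda}:(z)$ and its sum $I_{\XX_\lambda}+(z)$ are read off and specialized at $\lambda=0$. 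You propose to skip that elimination and replace it by a structural principle, and the principle is where your argument has a genuine gap. Two distinct facts are silently assumed in your phrase ``the limit scheme is forced''. First, residual and trace do not commute with flat limits: in general one only has the one--sided inclusions $Res_H(\XX_0)\subseteq\lim_{\lambda\to 0}Res_H(\XX_\lambda)$ and $Tr_H(\XX_0)\supseteq\lim_{\lambda\to 0}Tr_H(\XX_\lambda)$, so computing the limits of the residuals and of the traces does not yet give $Res_H(\XX_0)$ and $Tr_H(\XX_0)$; your Hilbert--polynomial bookkeeping only shows that the two possible discrepancies cancel each other, not that each vanishes. Second, and more seriously, even exact knowledge of $Res_H(\XX_0)$ and $Tr_H(\XX_0)$ does not determine $\XX_0$: a subscheme is not recoverable from its residual and its trace, because the extension between them matters.

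Here is a concrete witness, already for two lines. Take $H=\{z=0\}$, $P=[1:0:0:0]$, $L=V(x,z)$, $M=V(y,z)$, and for $c\neq 0$ consider
$$I_1=(xy,\ xz,\ yz,\ z^2), \qquad I_2=(xy+czt,\ xz,\ yz,\ z^2).$$
Both ideals contain $(z^2)$ (so both schemes lie in $2H$), both are contained in $(xy,z)$ (so both schemes contain $L+M$), both have colon $I_j:(z)=(x,y,z)$ and sum $I_j+(z)=(xy,z)$, i.e.\ identical residual and trace, and both quotients even have the same Hilbert function, hence the same Hilbert polynomial $2(d+1)$. Yet they define different subschemes of $\PP^3$: localizing at $P$, one has $xy\in I_1$ but $xy\notin I_2$ (the tangent space at $P$ is $3$--dimensional for $V(I_1)$, the sundial, and $2$--dimensional for $V(I_2)$, whose embedded point ``lies along $H$''). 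The analogous ambiguity exists for every $s$, so no amount of length counting, containment of the cone configuration, or containment in $2H$ can single out $L_1+L_2'+\cdots+L_s'+\DD_{H,s}(P)$ among the candidates with the prescribed residual and trace. This is precisely the gap that the paper closes by computation: it exhibits $I_{\XX_0}=\lim_\lambda I_{\XX_\lambda}$ explicitly and identifies it with $I_{L_1}\cap\cdots\cap I_{L_{s,0}}\cap(I_P^{s}+I_H^{2})$, and its residual/trace verification is a computation on those explicit ideals (anchored ``by construction'' to the explicit family), not an abstract determination principle. To repair your argument you would have to either perform that elimination, or produce enough explicit elements of the limit ideal (for instance the $z$--free form $xy(x+y)\cdots(x+(s-3)y)(x-y)$ and the subideal $z\,\mathfrak{m}_P^{s-1}+(z^2)$, shown to be limits of elements of $I_{\XX_\lambda}$) so that the ideal they generate already has Hilbert polynomial $s(d+1)$, forcing equality. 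As written, the step you describe as making the explicit elimination unnecessary is exactly the step that does not close.
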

\begin{proof}
We proceed by induction on $s$. For $s= 3$, the result is immediate
from \S \ref{ext}.

Now suppose that $s\geq 4$. Assume by induction that the lemma holds
for $s-1$, which says that one can degenerate a disjoint union of
$s-1$ lines to a $(2,s-1)$-cone configuration.

We can prove this lemma almost in the same way as in \S \ref{ext}.
However, some
 modifications will be need.
 We will not attempt to do this exhaustively, but will only indicate
 certain aspects of this general situation, and will mention
  what changes one needs to make.

Let $L_1,\ldots,L_s$ be $s$ disjoint lines in $\PP^3$. First, by the
induction hypothesis, one  degenerates the $(s-1)$ lines
$L_1,\ldots,L_{s-1}$ to the $(2,s-1)$-cone configuration
$L_1+L_2'+\cdots+L'_{s-1}+\DD_{H,s-1}(P)$, where  $H$ is  a fixed
plane containing the line $L_1$ and $P$ is the single intersection
point of the lines $L_1,L_2',\ldots,L_{s-1}'$.
 Considering the
coordinates $t,x,y,z$ on $\PP^3$, one  may suppose that
$P=[1:0:0:0]$, and $H$ is the plane defined by the equation $z=0$,
 moreover  the lines $L_1,L_2',\ldots,L_{s-1}'$ are
defined by the ideals
\begin{equation}\label{li}
\left\{
\begin{array}{llllll}
I_{L_1}=(x,z); \\
I_{L_2'}=(y,z); \\
I_{L_3'}=(x+y,z); \\
I_{L_4'}=(x+2y,z);\\
\vdots \\
I_{L_{s-1}'}=(x+(s-3)y,z),
\end{array}
\right.
\end{equation}
(note that we can always degenerate the lines  so that this is the
case).

Next,  specialize the line $L_s$ into $H$ such that this line  meets
each of the previous lines $L_1,L_2',\ldots,L_{s-1}'$ in the
following distinct points
$$P_1= L_1\cap L_s; \  P_2= L_2'\cap L_s; \  P_3= L_3'\cap L_s;  \ldots;  P_{s-1}= L_{s-1}'\cap
L_s.$$
 Notice  that we have the double structure at these intersection
 points, (see Lemma \ref{sun}), so this implies that the specialized scheme becomes
  $$[L_1+L_2'+\cdots+L_{s-1}'+\DD_{H,s-1}(P)]+[L_s+2P_1+\cdots+2P_{s-1}],$$
 which we call  $\XX_1$.

We may assume   that the line $L_s$ is defined by
 $$I_{L_s}= (x-y-t,z).$$
hence we have
\begin{displaymath}
\left\{
\begin{array}{llllll}
P_1= [1:0:-1:0]; \\
P_2= [1:1:0:0]; \\
P_3= [1:1/2:-1/2:0]; \\
P_4= [1:2/3:-1/3:0];\\
\vdots \\
 P_{s-1}= [1:(s-3)/(s-2):-1/(s-2):0].
\end{array}
\right.
\end{displaymath}
 Now let the line $L_s$ move in such a way that the $(s-1)$ points
$P_1,\ldots,P_{s-1}$ approach  the point $P= [1:0:0:0]$. In order to
find the resulting scheme of this degeneration, consider the flat
family $\{\XX_{\la}\}_{\la\in k}$, whereas  $\XX_{\la}$ is the
scheme
\begin{equation}\label{defl}
 [L_1+L_2'+\cdots+L_{s-1}'+\DD_{H,s-1}(P)]+[L_{s,\la}+2P_{1,\la}+\cdots+2P_{s-1,\la}],
 \end{equation}
 where the points $P_{1,\la},\ldots,P_{s-1,\la}$,
\begin{displaymath}
\left\{
\begin{array}{llllll}
P_{1,\la}= [1:0:-\la:0]; \\
P_{2,\la}= [1:\la:0:0]; \\
P_{3,\la}= [1:\la/2:-\la/2:0]; \\
P_{4,\la}= [1:2\la/3:-\la/3:0];\\
\vdots \\
P_{s-1,\la}= [1:\la(s-3)/(s-2):-\la/(s-2):0],
\end{array}
\right.
\end{displaymath}
 move to the point
 $P$, as well as the line  joining $L_{s,\la}$, when $\la$ tends to
 $0$. That is equivalent to setting  that
 \begin{equation}\label{ls}
I_{L_{s,\la}}= (x-y-\la t,z),
 \end{equation}
 and
 \begin{equation}\label{pi}
\left\{
\begin{array}{llllll}
I_{P_{1,\la}}= (x,x-y-\la t,z);\\
I_{P_{2,\la}}= (y,x-y-\la t,z);\\
I_{P_{3,\la}}= (x+y,x-y-\la t,z);\\
I_{P_{4,\la}}= (x+2y,x-y-\la t,z);\\
\vdots \\
I_{P_{s-1,\la}}= (x+(s-3)y,x-y-\la t,z),
\end{array}
\right.
\end{equation}

 From (\ref{defl}), observe that
\begin{equation}\label{xx}
I_{\XX_{\la}}= [I_{L_1}\cap I_{L_2'}\cap\ldots\cap I_{L_{s-1}'}\cap
(I_P^{s-1}+I_H^2)]
\end{equation}
$$ \cap [I_{L_{s,\la}}\cap I_{P_{1,\la}}^2\cap\ldots\cap
I_{P_{s-1,\la}}^2].$$

Now  to get the limiting scheme $\XX_0$, we continue as in
 \S \ref{ext}, but by using a longer and more tedious calculation,
 that we have not performed here.
 Indeed, by substituting  (\ref{li}),
(\ref{ls}), (\ref{pi}) in the right hand side of the equality
(\ref{xx}); then calculating
 in  a similar but more
complicated way to that    carried out  for  \S \ref{ext} (which can
be done by using CoCoA \cite{CoCoa});
 and finally letting $\la=0$;  we obtain that
$$I_{\XX_0}= I_{L_1}\cap I_{L_2'}\cap\ldots\cap I_{L_{s-1}'}\cap I_{L_{s,0}}\cap
(I_P^{s}+I_H^2),$$
 that is, denoting by $L_s'$ the line $L_{s,0}$,   the ideal of the  $(2,s)$-cone configuration
 $$L_1+L_2'+\cdots+L_{s-1}'+L_s'+\DD_{H,s}(P),$$
 as expected.

 Although the above proof is a very natural way to find the limiting scheme $\XX_0$, we would like
 to provide a different proof
 by finding  the limits of residual  and trace of the scheme $\XX_{\la}$  with respect to the plane
 $H$.
   In fact  to prove
   that $$\XX_0= L_1+L_2'+\cdots+L_{s-1}'+L_s'+\DD_{H,s}(P),$$
    it suffices, by construction,   to prove that
     $$Tr_H(\XX_0)= L_1+L_2'+\cdots+L_{s-1}'+L_s'$$ and
      $$Res_H(\XX_0)=\DD_{H,s-1}(P),$$
      which we will verify as follows.

 With regard to trace scheme $Tr_H(\XX_{\la})$, using (\ref{xx}), (\ref{li}), (\ref{ls}), (\ref{pi}) and noting
 that $I_P= (x,y,z), ~ I_H= (z)$, we have
\begin{eqnarray*}
I_{\XX_{\la}}+(z)&=& {\textbf{\{}}(x,z)\cap(y,z)\cap(x+y,z)\cap\ldots\cap(x+(s-3)y,z)\cap[(x,y,z)^{s-1}+(z^2)]\\
& & \ \cap (x-y-\la t,z)\cap(x,x-y-\la t,z)^2\cap(y,x-y-\la
t,z)^2\\
& & \ \cap(x+y,x-y-\la t,z)^2\cap\ldots\cap(x+(s-3)y,x-y-\la
t,z)^2{\textbf{\}}}+(z)\\
&=&  {\textbf{\{}}(x)\cap(y)\cap(x+y)\cap\ldots\cap(x+(s-3)y)\cap(x,y)^{s-1}\\
& & \ \cap (x-y-\la t)\cap(x,x-y-\la t)^2\cap(y,x-y-\la
t)^2\\
& & \ \cap(x+y,x-y-\la t)^2\cap\ldots\cap(x+(s-3)y,x-y-\la
t)^2{\textbf{\}}}+(z)\\
 &=& \textbf{(}x y(x+y)\cdots (x+(s-3)y)(x-y-\la
t)\textbf{)}+(z),
\end{eqnarray*}
and from here by setting $\la=0$, we obtain the ideal of
$Tr_H(\XX_0)$, that is
\begin{eqnarray*}
I_{\XX_{0}}+(z)&=& \textbf{(}x y(x+y)\cdots (x+(s-3)y)(x-y)\textbf{)}+(z)\\
&=& I_{L_1}\cap I_{L_2'}\cap I_{L_3'}\cap\ldots\cap I_{L_{s-1}'}\cap
I_{L_{s,0}},
\end{eqnarray*}
so we get
\begin{equation}\label{tr0}
Tr_H(\XX_0)= L_1+L_2'+L_3'+\cdots+L_{s-1}'+L_{s,0}.
\end{equation}

On the other hand  with regard to residual scheme
$Res_H(\XX_{\la})$, using (\ref{xx}), (\ref{li}), (\ref{ls}),
(\ref{pi}) and noting
 that $I_P= (x,y,z), ~ I_H= (z)$, we have
\begin{eqnarray*}
I_{\XX_{\la}}:(z)&=& \textbf{\{}(x,z)\cap(y,z)\cap(x+y,z)\cap\ldots\cap(x+(s-3)y,z)\cap[(x,y,z)^{s-1}+(z^2)]\\
& & \ \cap (x-y-\la t,z)\cap(x,x-y-\la t,z)^2\cap(y,x-y-\la
t,z)^2\\
& & \ \cap(x+y,x-y-\la t,z)^2\cap\ldots\cap(x+(s-3)y,x-y-\la
t,z)^2\textbf{\}}:(z)\\
 &=& \textbf{\{}[(x,y,z)^{s-1}+(z^2)]:(z)\textbf{\}}\\
& & \ \cap[(x,x-y-\la t,z)^2:(z)]\cap[(y,x-y-\la
t,z)^2:(z)]\\
& & \ \cap[(x+y,x-y-\la t,z)^2:(z)]\cap\ldots\cap[(x+(s-3)y,x-y-\la
t,z)^2:(z)]\\
&=& [(x,y,z)^{s-2}+(z)]
  \cap(x,x-y-\la t,z)\cap(y,x-y-\la t,z)\\
& &\cap(x+y,x-y-\la
 t,z)\cap\ldots\cap(x+(s-3)y,x-y-\la t,z)\\
 &=& [(x,y)^{s-2}+(z)]\cap\textbf{(}x y(x+y)\cdots(x+(s-3)y),x-y-\la t,z\textbf{)}\\
 &=& \textbf{\{}(x,y)^{s-2}\cap\textbf{(}x y(x+y)\cdots(x+(s-3)y),x-y-\la t\textbf{)}\textbf{\}}+(z)\\
 &=& \textbf{(}x y(x+y)\cdots(x+(s-3)y)\textbf{)}+(x,y)^{s-2}(x-y-\la t)+(z),
\end{eqnarray*}
and from here by letting $\la=0$, we obtain the ideal of
$Res_H(\XX_0)$, that is
\begin{eqnarray*}
I_{\XX_{0}}:(z)&=&
 \textbf{(}x y(x+y)\cdots(x+(s-3)y)\textbf{)}+(x,y)^{s-2}(x-y)+(z)\\
 &=& (x,y)^{s-1}+(z)\\
 &=& (x,y,z)^{s-1}+(z),
\end{eqnarray*}
so we have
\begin{equation}\label{re0}
Res_H(\XX_0)= (s-1)P|_H= \DD_{H,s-1}(P).
\end{equation}
Now putting together (\ref{tr0}) and (\ref{re0}), and denoting by
$L_s'$ the line $L_{s,0}$,  yields that
$$\XX_0= L_1+L_2'+\cdots+L_{s-1}'+L_s'+\DD_{H,s}(P),$$
which is the  $(2,s)$-cone configuration as we wanted.

By what we have seen, the proof involves two kinds of degenerations.
 The first one is the  degeneration inductive process obtained by
 successive degeneration techniques as explained above, which
  degenerates    a disjoint union $\XX$ of $s$
 lines to the scheme of type $\XX_1$.
  The second one
   is the  degeneration of  the scheme $\XX_1$ to the $(2,s)$-cone configuration $\XX_0$,
     where  the main effort of the proof goes toward in.
 These degenerations can be combined in order to give rise to the
 degeneration of $\XX$ to $\XX_0$, and that is what we wanted to
 show.
 \end{proof}
\begin{rem}
A  $(2,s)$-cone configuration as a degeneration of $s$ disjoint
lines can be obtained in a slightly different way. In fact, one can
degenerate $s$ disjoint lines  $L_1,\ldots,L_s$   to a star
configuration in a fixed plane $H$ containing $L_1$, and denote them
again by the same letters as before. Call $P$ the intersection point
of $L_1$ with $L_2$. Then let the lines $L_3,\ldots,L_s$, perhaps
one by one,  move in such a way that all the points of this star
configuration unless the point $P$, which are ${s \choose 2}-1$
points, approach  $P$.
 Now one finds that the resulting  degenerated scheme is nothing but the desired
 $(2,s)$-cone configuration $$L_1+\cdots+L_s+\DD_{H,s}(P),$$
 (see Figure \ref{fig1} for $(2,3)$-cone configuration).
\end{rem}
At the end  it is worth mentioning that although in the present
paper we only deal with the case of $(2,s)$-cone configuration as a
degeneration of $s$ disjoint lines in $\PP^3$, one might be tempted
to generalize Lemma \ref{cone} to higher dimensional ambient spaces
$\PP^n$.
 In fact, to be generally applicable in the induction argument discussed in the this section,
 the strong requirement that the
 degeneration process takes place in a flat family  is needed.
As the following lemma states, this is certainly feasible.
\begin{lem}\label{gen}
 There exists a flat family $\{\XX_{\lambda}\}_{\lambda\in k}$ of subschemes  in
$\PP^n$,  with $s,n\geq 3$, whose   general fiber is
 a disjoint  union of $s$  lines and whose   special fiber is a
 $(2,s)$-cone configuration.
\end{lem}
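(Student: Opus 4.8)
The plan is to generalize Lemma \ref{cone}, which establishes the case $n=3$, to arbitrary $\PP^n$ with $n\geq 3$. The essential observation is that a $(2,s)$-cone configuration lives entirely inside a plane $H\cong\PP^2$ and its first-order neighborhood: the lines $L_1,\ldots,L_s$ all lie in $H$, and the $(2,s;n)$-point $\DD_{H,s}(P)$ is by definition $sP\cap 2H$, whose only infinitesimal directions transverse to $H$ are confined to the double plane $2H$. Thus the entire special fiber $\XX_0$ is supported on a $\PP^3$-sized piece of $\PP^n$; more precisely, it is contained in the subscheme $2H$ of $\PP^n$. This suggests the whole degeneration can be realized inside a generic $\PP^3\subset\PP^n$ containing $H$, so that Lemma \ref{cone} applies directly.

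First I would fix a generic plane $H\cong\PP^2$ containing the line $L_1$, and inside a generic linear $\PP^3=T$ with $H\subset T\subset\PP^n$. By Lemma \ref{sun} applied inductively (specializing each $L_i$, $i\geq 2$, into the chosen $T$ via a sundial supported on its pairwise intersection with an earlier line), I degenerate the disjoint union $L_1+\cdots+L_s$ in $\PP^n$ to a disjoint union of $s$ lines lying in $T$. Since Lemma \ref{sun} already produces a flat family, and the successive sundial degenerations compose to a flat family, this first stage is a flat degeneration of the original $s$ skew lines in $\PP^n$ to $s$ skew lines in $T\cong\PP^3$.

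Next I would invoke Lemma \ref{cone} verbatim: applied inside $T\cong\PP^3$, it furnishes a flat family $\{\XX_\lambda\}_{\lambda\in k}$ of subschemes of $T$ whose general fiber is $s$ disjoint lines in $T$ and whose special fiber is the $(2,s)$-cone configuration $\XX_0=L_1+L_2'+\cdots+L_s'+\DD_{H,s}(P)$, with $H\subset T$ the plane and $\DD_{H,s}(P)=sP\cap 2H$. The one point requiring care is that the notation $\DD_{H,s}(P)$ in Definition \ref{DD} is defined with respect to the ambient $\PP^n$, via the ideal sheaf $\II_P^s+\II_H^2$; I must check that computing this inside $T$ and then including $T\hookrightarrow\PP^n$ yields the same $(2,s;n)$-point. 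This is clear because $H$ has the same equation whether read in $T$ or in $\PP^n$, and the double-plane condition $\II_H^2$ only constrains the single normal direction to $H$ inside $\PP^2$-plus-one, which lies in $T$; the additional normal directions of $H$ in $\PP^n$ are already killed by demanding the scheme lie on $2H\subset\PP^n$. Finally, a family of subschemes of $T\subset\PP^n$ is automatically a family of subschemes of $\PP^n$, and flatness over $k$ is preserved under the closed immersion $T\hookrightarrow\PP^n$, so $\{\XX_\lambda\}$ is the desired flat family in $\PP^n$. Composing the two flat degenerations (first into $T$, then the Lemma \ref{cone} family) completes the argument.

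The hard part, and the only genuine subtlety, is the compatibility of the trace/residual computation with the codimension jump when passing from $T\cong\PP^3$ to $\PP^n$: one must be sure that the flat limit taken inside $T$ does not acquire extra embedded structure in the normal directions of $T$ when viewed in $\PP^n$. Since each $\XX_\lambda$ for $\lambda\neq 0$ is a reduced union of lines in $T$ and the limiting ideal is computed entirely from generators supported on $T$, no such extra nilpotents appear, and the identification $Res_H(\XX_0)=\DD_{H,s-1}(P)$, $Tr_H(\XX_0)=L_1+L_2'+\cdots+L_s'$ from Lemma \ref{cone} transfers unchanged. I would therefore phrase the proof as a short reduction: \emph{realize everything inside a generic $\PP^3\subset\PP^n$ and apply Lemma \ref{cone}}, with a remark verifying the two notational compatibilities above.
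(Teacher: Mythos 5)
Your route is genuinely different from the paper's: the paper proves Lemma \ref{gen} by redoing, mutatis mutandis, the coordinate computation of Lemma \ref{cone} with ambient space $\PP^n$ (its proof is the single sentence that the argument of Lemma \ref{cone} generalizes immediately), whereas you reduce to Lemma \ref{cone} by first degenerating the $s$ skew lines of $\PP^n$ into a linear $T\cong\PP^3$ and then composing flat degenerations. The reduction idea is sound and arguably cleaner, but your first stage is broken as written: Lemma \ref{sun} does not do what you ask of it. A sundial degeneration forces two lines to meet and adds an embedded point, so specializing each $L_i$ into $T$ ``via a sundial'' yields a union of pairwise intersecting lines with embedded points, not \emph{a disjoint union of $s$ lines lying in $T$}. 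No sundials are needed here: simply move the lines into $T$ keeping all of them pairwise disjoint throughout (generic lines in $\PP^3$ are skew); every fiber then has Hilbert polynomial $s(d+1)$, and constancy of the Hilbert polynomial over a reduced base gives flatness. Also, composing the two degenerations into one family deserves a word (e.g.\ irreducibility of the locus of disjoint unions of $s$ lines in the Hilbert scheme of $\PP^n$), though the paper is equally cavalier about this point inside the proof of Lemma \ref{cone}.

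The genuine gap is your ``compatibility check,'' which asserts something false. Take coordinates $t,x,y,z,w_1,\dots,w_{n-3}$ on $\PP^n$, with $T=\{w_1=\dots=w_{n-3}=0\}$, $H=\{z=w_1=\dots=w_{n-3}=0\}$, $P=[1:0:\dots:0]$. Since a flat limit of subschemes of $T$ stays inside $T$, the zero-dimensional part of your special fiber has ideal $(x,y,z)^s+(z^2)+(w_1,\dots,w_{n-3})$ at $P$, of length $\binom{s+1}{2}+\binom{s}{2}$. Definition \ref{DD} instead prescribes $\II_P^s+\II_H^2=(x,y,z,w_1,\dots,w_{n-3})^s+(z,w_1,\dots,w_{n-3})^2$, of length $\binom{s+1}{2}+(n-2)\binom{s}{2}$: the double plane $2H$ does \emph{not} kill the normal directions $w_i$, it retains first-order structure along every one of them (indeed $w_1$ lies in the first ideal but not in the second, which has no linear forms). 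So for $n>3$ the two schemes differ, and no repair of the argument can identify them: the literal $(2,s)$-cone configuration of Definitions \ref{DD}--\ref{2scone} has Hilbert polynomial $s(d+1)+(n-3)\binom{s}{2}$, hence for $n>3$ it is not a flat limit of $s$ disjoint lines at all. The only tenable reading of Lemma \ref{gen} — consistent with Definition \ref{sundial}, where the embedded point of a sundial in $\PP^n$ is $2P|_T$ rather than $2P$ — confines the $(2,s;n)$-point to a $\PP^3$ containing $H$, and that is exactly the scheme your construction produces. Note that your own closing observation (``no extra nilpotents appear in the normal directions of $T$'') contradicts your claimed identification. So your family is the right one, but the write-up must flag this definitional discrepancy and resolve it (e.g.\ by the length count above), rather than claim the two schemes coincide.
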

\begin{proof}
The proof follows an immediate generalization of the argument used
in the proof of  Lemma \ref{cone}.
\end{proof}
\section{Reduction Process}\label{pr}
We are now ready to prove the main theorem of this paper (Theorem
\ref{HH th} in Introduction):
\begin{thm}\label{mth}
 Let  $X\subset\PP^3$ be a generic union of $e$ lines.
Then $X$ has good postulation, i.e.,
$$h^0(\II_X(d)) = \max \left \{ {d+3 \choose 3}- e(d+1), 0 \right \}.$$
\end{thm}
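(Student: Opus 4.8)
The plan is to prove Theorem \ref{mth} by a standard double induction on the degree $d$ and the number of lines $e$, using Castelnuovo's inequality (Lemma \ref{cas}) with the plane $H\cong\PP^2$ as the hyperplane, but where the key innovation is to feed into the induction the degeneration of skew lines to $(2,s)$-cone configurations provided by Lemma \ref{cone}. Since good postulation is an open condition (by semicontinuity, \cite[III, 12.8]{Hart}), it suffices by Lemma \ref{cone} to exhibit \emph{one} specialization of the $e$ generic lines whose limit has good postulation; I would therefore work throughout with a degenerate scheme rather than the generic one. The expected value $\max\{\binom{d+3}{3}-e(d+1),0\}$ splits the problem at the critical number of lines $e_0$ where $\binom{d+3}{3}-e(d+1)$ changes sign; as in the original Hartshorne--Hirschowitz argument, one reduces to proving the two boundary statements $\HH'_{d-1}$ and $\HH''_d$ introduced in \S\ref{hhh}, so the bulk of the work is deferred to Sections \ref{h'} and \ref{h''}.

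First I would set up the reduction to the statements $\HH_d$, $\HH'_{d-1}$, $\HH''_d$: specialize an appropriate number of the lines into the plane $H$ so that they form a $(2,s)$-cone configuration, then apply Lemma \ref{cas} with $e=1$ (degree-one hypersurface $H$). This yields
$$h^0(\II_X(d))\leq h^0(\II_{Res_H(X)}(d-1))+h^0(H,\II_{Tr_H(X)}(d)).$$
By the computation recorded after Definition \ref{2scone}, the residual $Res_H$ of a $(2,s)$-cone configuration is $(s-1)P|_H$ and its trace is the cone configuration $L_1+\cdots+L_s$ lying in $H$. The residual scheme, being supported in $\PP^3$ on lines plus an $(s-1)$-fold infinitesimal structure, feeds the induction on $d$ (statement $\HH'_{d-1}$), while the trace scheme, living in the plane $H\cong\PP^2$, is governed by planar postulation results—precisely where Lemma \ref{dot} and Corollary \ref{ah2} on $2$-dots and fat points in $\PP^2$ enter (statement $\HH''_d$). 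The induction on $e$ handles the lines not specialized into $H$ via Remark \ref{ss'}.

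The main obstacle, as in the original proof, is bookkeeping the Castelnuovo inequality so that it becomes an \emph{equality}: one must also control $h^1$ (equivalently, verify the reverse inequality) to conclude that the scheme imposes exactly the expected number of conditions, not merely at most. This requires choosing the number of lines to specialize into $H$ so that the degree of the trace on $H$ matches $d$ as closely as possible, making $H$ a forced component and the residual cleanly of degree $d-1$; balancing these two requires a careful case analysis according to the residues of $\binom{d+3}{3}$ and $e$ modulo the relevant quantities. The delicate point specific to our $\PP^2$-based approach is that the trace is a \emph{cone} configuration (all lines through a single point $P$) rather than a generic planar union of lines, so its postulation cannot be read off from the generic line theorem in $\PP^2$ directly; instead the extra infinitesimal datum $\DD_{H,s}(P)$ must be tracked, and it is exactly the good postulation of $2$-dots and fat points established in Lemma \ref{dot} that makes the planar step $\HH''_d$ tractable. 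I would organize the argument so that these two planar and spatial pieces are isolated as the separate inductive statements $\HH'_{d-1}$ and $\HH''_d$, proving each in turn in the subsequent sections, and then assemble them here to close the induction and recover $\HH_d$, which is Theorem \ref{mth}.
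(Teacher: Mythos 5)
Your outline shares the paper's architecture (degenerate lines via $(2,s)$-cone configurations, apply Castelnuovo's inequality with a plane $H$, isolate the residual and trace as the statements $\HH'_{d-1}$ and $\HH''_d$), but the reduction you sketch has a genuine gap, and it is exactly the one you flag yourself. You say one must ``control $h^1$ (equivalently, verify the reverse inequality)'' so that Castelnuovo becomes an equality. The paper never has to do this, because the object it proves things about is not the union of $e$ lines but the \emph{balanced} auxiliary scheme of the statement $\HH_d$: $r=\left\lfloor\binom{d+3}{3}/(d+1)\right\rfloor$ generic lines plus $q=\binom{d+3}{3}-r(d+1)$ points on a generic line, whose total number of conditions is exactly $\binom{d+3}{3}$. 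For that scheme the restriction sequence gives $h^0(\II_X(d))-h^1(\II_X(d))=\binom{d+3}{3}-r(d+1)-q=0$, so $h^0=h^1$, and the single vanishing $h^0(\II_X(d))=0$ --- which the one-sided Castelnuovo bound is perfectly suited to prove, since the target is zero --- delivers both cohomology vanishings at once. Theorem \ref{mth} for every $e$ then follows from Remark \ref{ss'} by monotonicity: remove components when $e<r$ (using $h^1=0$), add the line $M$ through the $q$ points and further lines when $e>r$ (using $h^0=0$). There is no induction on $e$ at all and no reverse inequality anywhere. Your proposal, lacking the $q$ collinear points, works with an unbalanced scheme for which $h^0$ and $h^1$ cannot simultaneously be expected to vanish; Castelnuovo bounds only $h^0$ from above, and your gesture at ``making $H$ a forced component'' is never attached to a scheme whose numerology closes, so the $h^1$ side of good postulation is left unproved.

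A second, related gap: the specialization you describe (degenerate some lines to a single $(2,s)$-cone configuration, leave the rest generic) does not produce residual and trace of the types $X'$ and $X''$ that $\HH'_{d-1}$ and $\HH''_d$ speak about. Those statements contain, respectively, $s$ degenerate conics with singular points on $H$, and $s$ double points plus $q$ collinear points; these arise from two further ingredients of the paper's specialization that you omit: $s$ pairs of lines degenerated to sundials $\widehat{C}_i=C_i+2Q_i$ with only the singular points $Q_i$ specialized into $H$ (residual: the conics $C_i$; trace: the double points $2Q_i|_H$), and the $q$ collinear points specialized into $H$. The number $s=\binom{m}{2}+(r-m)d-\binom{d+2}{3}$ is defined precisely so that the sundials tune the residual count in degree $d-1$; with a cone configuration alone the counts do not balance and the induction cannot be quoted. (Minor misattribution in the same vein: Lemma \ref{dot} is used inside the proof of $\HH'_{d-1}$, where $2$-dots appear as traces of the degenerate conics; the planar statement $\HH''_d$ runs on Corollary \ref{ah2}.)
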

In this section, we explain a reformulation of Theorem \ref{mth},
that is the  good postulation  statement $\HH_d$;
 next, we  describe the inductive procedure involving slightly
 different
 good postulation   statements in
lower degrees (i.e. $\HH'_{d-1}$) and lower dimensions (i.e.
$\HH''_d$)  which we will use to prove $\HH_d$.
\subsection{Auxiliary Vanishing Statements}\label{st}
Let us  illustrate our  method for proving  Theorem \ref{mth}.

In the setting of the theorem,
 as $X$ varies in a flat family,
 by the semicontinuity of cohomology \cite[III, 12.8]{Hart},
the condition of good postulation, is clearly an open condition on
the family of $X$. Hence to prove Theorem \ref{mth}, it is enough to
find any scheme  of $e$ lines, or even any scheme which is a
specialization of a flat family of $e$ lines, which has good
postulation.

 For each $d\in \NN$ consider the statement:

\vspace{0.5cm}
$(\HH_d)$:
 \emph{Assume that
 \begin{equation}\label{rq}
r= \left \lfloor {{d+3 \choose 3}}\over {d+1}\right \rfloor
 ; \ \ \ \ \ \ \
q={d+3 \choose 3} - r(d+1).
\end{equation}
 Let the scheme  $X\subset \PP^3$ be
the generic union of $r$ lines $L_1,\ldots,L_r$,  and $q$  points
$P_1,\ldots,P_q$ lying on a generic line $M$.
  Then $X$ has good
postulation, i.e.,
$$h^1(\II_X(d))= h^0(\II_X(d)) =   {d+3 \choose 3} - r(d+1)- q=
0.$$}

 Using Remark \ref{ss'}, Theorem \ref{mth} follows immediately
from the statement $\HH_d$, as explained below.  Given $d$, suppose
that $X$ is the scheme as  in the statement $\HH_d$. To prove
Theorem \ref{mth} for any $e<r$,
 if one removes the $q$ points and $r-e$
lines from $X$, one gets a scheme $Y\subset X$ consisting of $e$
disjoint lines, then $h^1(\II_X(d))=0$ implies that
$h^1(\II_{Y}(d))=0$ (by Remark \ref{ss'} (i)).  To prove Theorem
\ref{mth} for any $e>r$, if one adds the line $M$ passing through
the $q$ collinear points, as well as adds $e-r-1$ disjoint lines, to
$X$, one gets a scheme $Y\supset X$ consisting of $e$ disjoint
lines, hence $h^0(\II_X(d))=0$ gives  $h^0(\II_{Y}(d))= 0$ (by
Remark \ref{ss'} (ii)).

Looking quickly at the base cases $d=1,2$  shows that the assertions
$\HH_1$ and $\HH_2$ are quite trivial:

\begin{itemize}
 \item[$(\HH_1)$:] If $X$ is a  union of 2 skew lines in $\PP^3$, then
 $h^0(\II_X(1))=0$, which means that there is no plane containing
 $X$; (this is so obvious).
 \item[$(\HH_2)$:] If $X$ is a generic union of 3 lines and one point in $\PP^3$, then $h^0(\II_X(2))=0$,
  which means that there is no quadric surface containing $X$;
  (this is clear,  because of the
  fact that through 3 disjoint lines in $\PP^3$ there passes a unique
  quadric surface, so it is suffices to take the point outside this
  quadric).
\end{itemize}
 It remains thus just  to verify   $\HH_d$ for the cases $d\geq 3$.
 In order to show  this assertion in its  generality, we need
 to consider two further statements for each $d\geq 3$, namely
the following:

\vspace{0.5cm}
$(\HH'_{d-1})$:
 \emph{Taking  $r$  as defined in (\ref{rq}),
  assume that
\begin{equation}\label{mst}
  m= \left \lfloor {d}\over {3}\right \rfloor+1
 ; \ \ \
s={m \choose 2}+ (r-m)d- {d+2 \choose 3}; \ \ \  t= r-m-2s.
 \end{equation}
 Let $H$ be a fixed plane in $\PP^3$,  and let  $P$ be a generic point on
$H$. Let $L_1,\ldots,L_t$ be  $t$ generic lines in $\PP^3$, and let
$C_1,\ldots,C_s\subset \PP^3$ be  $s$ generic degenerate conics with
only their singular points on the plane $H$.
 If
  $$X'= (m-1)P|_H+L_1+\cdots+L_t+C_1+\cdots+C_s\subset\PP^3,$$
then
$$ h^0(\II_{X'}(d-1)) =  0.$$}

$(\HH''_{d})$:
 \emph{ Take
$q$ as defined in (\ref{rq}), and take $m,s,t$  as defined in
(\ref{mst}). Let the scheme $X''\subset \PP^2$ be the generic union
of  one cone configuration of type $m$, $s$ double points, $t$
simple points, and $q$ points lying on a generic line. Then
$$ h^0(\II_{X''}(d)) =  0.$$}

Keep in mind  the data  $r,q,m,s,t$  as  defined in (\ref{rq}) and
(\ref{mst}) in the remainder of the paper. Note that  a direct
computation yields the following relations, which are extensively
used in Sections \ref{h'} and \ref{h''},

\vspace{0.3cm}

 \hspace{-0.6cm}
 $(\ddag):$
$$
  \begin{matrix}
{\hbox{for}} \ {d\equiv 0} \ \hbox{\small{(mod 3)}}:  &
r=\frac{(d+2)(d+3)}{6};& q=0;& m= \frac{d+3}{3};& s={m-1 \choose
2};& t= {m+2 \choose 2}- 3,\\ \\
 {\hbox{for}} \ {d\equiv 1} \ \hbox{\small{(mod 3)}}: &  r=\frac{(d+2)(d+3)}{6};& q=0; &
   m= \frac{d+2}{3};& s={m \choose 2};& t= {m+1 \choose
 2},\\ \\
 {\hbox{for}} \ {d\equiv 2} \ \hbox{\small{(mod 3)}}: &  r=\frac{(d+1)(d+4)}{6};& q=\frac{d+1}{3};
 & m= \frac{d+1}{3};& s={m \choose 2};& t= {m+2 \choose 2}- 1 .\\
   \end{matrix}
$$

\vspace{0.1cm}
\subsection{$\mathbf{\HH'_{d-1} + \HH''_{d}  \Rightarrow
\HH_{d}}$  for  $\mathbf{d\geq 3}$}\label{hhh}

Our goal in this part will be to show that: if $d\geq 3$, then the
statements $\HH'_{d-1}$ and $\HH''_d$ imply $\HH_d$.  To begin, let
the scheme $X$ as in the statement $\HH_d$, where
$$X= L_1+\cdots+L_r+P_1+\cdots+P_q\subset \PP^3,$$
 is the generic union of $r$
lines $L_1,\ldots,L_r$,  and $q$  points $P_1,\ldots,P_q$ lying on
the generic line $M$.

Fix a generic plane $H\cong \PP^2$. For the purpose of getting
$h^0(\II_{X}(d)) = 0$, we wish to find a scheme $\widetilde{X}$
obtained from $X$ by  different kind of specializations and
degenerations in the most  efficient way,  in the sense  that the
desired vanishing $h^0(\II_{\widetilde{X}}(d)) = 0$ can be then
achieved. Now, setting $m,s,t$ as defined in (\ref{mst}), we
construct such $\widetilde{X}$ from $X$  by specializing the $q$
collinear points into $H$, by  degenerating $m$ lines to a
$(2,m)$-cone configuration supported in $H$, and by degenerating $s$
other pairs of lines to sundials with the property that  only their
singular points contained in $H$. To be more precise:
\begin{itemize}
\item specialize the line $M$, consequently the points
$P_1,\ldots,P_q$,  into $H$, (by abuse of notation, we will again
denote these specialized points by $P_1,\ldots,P_q$);
\item degenerate   $m$ of the lines $L_i$ to  $(2,m)$-cone
configuration $$\widehat{\CC}= \CC+ \DD_{H,m}(P),$$
 where $\CC$ is a cone configuration of type $m$ in $H$, and $P$ is
 the singular point of $\CC$, (recall that $\DD_{H,m}(P)= mP\cap 2H$);
\item degenerate the next $s$ pairs of  lines $L_i$, so that they become
   $s$ sundials $$\widehat{C}_i= C_i+ 2Q_i;\ \ \ \ \ (1\leq i\leq s),$$
   where  $C_i$ is a
   degenerate conic  and $Q_i$
   is the
singular point of $C_i$; furthermore, specialize the points $Q_i$
into $H$;
\item leave the remaining lines $L_i$, which are $t=r-m-2s$ lines,
generic outside $H$, denoted $L_1,\ldots,L_t$;
\end{itemize}
 then let
$$\widetilde{X}= \widehat{\CC}+\widehat{C}_1+\cdots+\widehat{C}_s+L_1+\cdots+L_t+P_1+\cdots+P_q\subset\PP^3.$$

 We need to show that $h^0(\II_{\X}(d))= 0,$ which consequently,
 by the semicontinuity of cohomology  \cite[III, 12.8]{Hart},
  implies that  $h^0(\II_{X}(d))= 0$.
  To do this, by Castelnuovo's inequality (Lemma \ref{cas}),
  it would be enough to show that the following vanishings
 $$h^0(\II_{Res_H(\X)}(d-1))= 0; \ \ \ \ \ h^0(H,\II_{Tr_H(\X)}(d))=0.$$

 First  we consider the residual of $\widetilde{X}$ with respect to  $H\cong \PP^2$.
 By using the fact that $Res_H(\widehat{\CC})= (m-1)P|_H$,  moreover  by  observing
 that
$Res_H(\widehat{C_i})= C_i$,  we get
 $$Res_H(\widetilde{X})= (m-1)P|_H+C_1+\cdots+C_s+L_1+\cdots+L_t\subset \PP^3.$$
 Note that   $C_1,\ldots,C_s$ are $s$ generic degenerate conics in
 $\PP^3$
  with the property that only their
 singular points lie on the plane $H$.   Then it is  quite immediate to see that
  the scheme $Res_H(\widetilde{X})$ is
  of the type $X'$ as in the statement $\HH'_{d-1}$. So this implies
  that
 proving  $h^0(\II_{Res_H(\X)}(d-1))= 0$ is precisely what we have
 to verify for  $\HH'_{d-1}$.

 Now we consider the trace of $\widetilde{X}$ on the plane $H$.
  By letting  the points $N_i=L_i\cap H$, $(1\leq i\leq t)$,  also  by using the
  facts
  that $Tr_H(\widehat{\CC})= \CC$ and that
    $Tr_H(\widehat{C}_i)$ is the double point $2Q_i|_H$ in
   $H$,  $(1\leq i\leq s)$,  we have
$$Tr_H(\widetilde{X})=
\CC+2Q_1|_H+\cdots+2Q_s|_H+N_1+\cdots+N_t+P_1+\cdots+P_q\subset
H\cong\PP^2,$$
 which is
     generic union in $H$ of the cone configuration $\CC$
 of type $m$, $s$ double points, $t$ simple points, and $q$
 collinear points.  Hence it is straightforward  to observe  that
  the scheme $Tr_H(\widetilde{X})$ is   of the type $X''$ as in
 the statement
 $\HH''_d$. It follows that
 proving  $h^0(H,\II_{Tr_H(\X)}(d))= 0$
 is exactly what we have to investigate for   $\HH''_d$.
\section{Proof of $\HH'_{d-1}$}\label{h'}
 In this section we will prove the statement $\HH'_{d-1}$ for all
 $d\geq 3$, which for convenient we state again.

\vspace{0.5cm}
 \emph{$(\HH'_{d-1})$:
 Let $d\geq 3$, and
$$r= \left \lfloor {{d+3 \choose 3}}\over {d+1}\right \rfloor; \ \ \
\ \ \  m= \left \lfloor {d}\over {3}\right \rfloor+1 ;$$
 $$s={m \choose 2}+ (r-m)d- {d+2 \choose 3}; \ \ \  \ \ \ t= r-m-2s.$$
 Let $H$ be a fixed plane in $\PP^3$, and $P$ be a generic point on $H$.
     Consider the scheme $X'$ as
     $$X'= (m-1)P|_H+L_1+\cdots+L_t+C_1+\cdots+C_s\subset\PP^3,$$
where $L_1,\ldots,L_t$  are $t$ generic lines in $\PP^3$, and
$C_1,\ldots,C_s$ are   $s$ generic degenerate conics in $\PP^3$ with
the property that  only their singular points, denoted $Q_i$ $(1\leq
i\leq s)$, lie on the plane $H$.
 Then
$$ h^0(\II_{X'}(d-1)) =  0.$$}

\vspace{0.2cm}
 Let us consider the  initial case $d= 3$.
 In this case we find that
   $$m=2; \ \ \ \ \ s=0; \ \ \ \ \ t=3,$$
   which means that
 $$X'= P+L_1+L_2+L_3\subset \PP^3,$$
 where $P$ is a generic point on the plane $H$, and $L_i$ are three
 generic lines in $\PP^3$ not lying in $H$. We specialize the line $L_1$ into $H$,
 and we again denote this specialized line by $L_1$, in addition we denote by $\Xp$
 the  scheme obtained from $X'$ by this specialization.
 We have
 $$Res_H(\Xp)= L_2+L_3\subset \PP^3,$$
 which is a union of two skew lines,
 so it is obvious that
 $$h^0(\II_{Res_H(\Xp)}(1))= 0.$$

 Also,
 $$Tr_H(\Xp)= P+L_1+N_2+N_3\subset H\cong\PP^2,$$
 where $L_i$ meets $H$ in the point $N_i$, $(i=2,3)$. Since the line
 $L_1$ is a fixed component for the curves  of
 $H^0(H,\II_{Tr_H(\Xp)}(2))$, we get
 $$h^0(H,\II_{Tr_H(\Xp)}(2))= h^0(H,\II_{Tr_H(\Xp)-L_1}(1)).$$
 Observing that  $Tr_H(\Xp)-L_1$ is made by three generic points, it trivially
 follows
 $$ h^0(H,\II_{Tr_H(\Xp)-L_1}(1))=0,$$
 and from here
 $$ h^0(H,\II_{Tr_H(\Xp)}(2))=0.$$

 Putting together $h^0(\II_{Res_H(\Xp)}(1))= 0$ and $
 h^0(H,\II_{Tr_H(\Xp)}(2))=0$, from Castelnuovo's inequality (Lemma
 \ref{cas}) we get $h^0(\II_{\Xp}(2))= 0$, therefore the semicontinuity
 of cohomology \cite[III, 12.8]{Hart} implies that
$$h^0(\II_{X'}(2))= 0.$$
Hence the case $d=3$ is done, i.e. $\HH'_2$ is proved.

\vspace{0.3cm}
 Now assume $d\geq 4$.
 The rest of the  proof will be by induction on $d$. We will investigate  separately the three cases
  ${d\equiv 0, 1, 2}$ (mod 3).
\begin{note}
 To stress the
  numerical data $m,s,t$ for  the scheme $X'$ considered in
 the statement $\HH'_{d-1}$, we will sometimes use the
 notation $X'_{(m,s,t;d)}$.
 \end{note}

\vspace{0.1cm}
\subsection{Case $\mathbf{d\equiv 0}$ (mod 3)}\label{h'0}
In this case we have, (see $(\ddag)$),
$$  m= \frac{d+3}{3}; \ \ \ \
s= {m-1 \choose 2}; \ \ \ \ t= {m+2 \choose 2}- 3.$$

 In order to the task of proving  $h^0(\II_{X'}(d-1))= 0$,
 we make a specialization of $X'$  via the plane $H$.
 Let $\Xp$ be the scheme obtained from $X'$ by
 degenerating   $m-1$ of the lines $L_i$ in such a way that they
 become a  $(2,m-1)$-cone configuration $\widehat{\CC}$,
  $$\Ch= \CC+ \DD_{H,m-1}(R),$$
 where $\CC$ is a cone configuration of type $m-1$ in $H$ and $R$
 is the singular point of $\CC$. The  remaining lines $L_i$,
 which are $t'= t-(m-1)= {m+1 \choose 2}-1$ lines, are generic not lying on $H$,
 and we  denote them by $L_1,\ldots,L_{t'}$. Then
 $$\Xp= (m-1)P|_H+ L_1+\cdots+L_{t'}+ \Ch+
 C_1+\cdots+C_s\subset \PP^3.$$

 Our goal is to
    show that $h^0(\II_{\Xp}(d-1))= 0,$
 which by the semicontinuity of cohomology  \cite[III, 12.8]{Hart} gives the desired
conclusion $h^0(\II_{X'}(d-1))= 0.$
   Now  to do so,  by Castelnuovo's inequality (Lemma \ref{cas}),
  our next task will be to show
 that
 $$h^0(\II_{Res_H(\Xp)}(d-2))= 0; \ \ \ h^0(H,\II_{Tr_H(\Xp)}(d-1))=0.$$

 First we consider  $Res_H(\Xp)$.
Since the scheme $(m-1)P|_H$ is completely contained in the plane
$H$,
 its
residual with respect to  $H$ is empty. Notice that
 $$Res_H(\Ch)= Res_H(\CC+\DD_{H,m-1}(R))= (m-2)R|_H.$$
 Letting $m'= m-1= \frac{d}{3}$,
 we  obtain
 $$Res_H(\Xp)=
 L_1+\cdots+L_{t'}+(m'-1)R|_H+C_1+\cdots+C_s\subset \PP^3,$$
 where $t'= {m'+2  \choose 2}-1$ and $s= { m' \choose 2}$. So
 it is an immediate to see that  $Res_H(\Xp)$   is
 precisely of the type   $X'_{(m',s,t';d-1)}$ as in $\HH'_{d-2}$  for the case of
 ${d-1\equiv 2}$ (mod 3), (one can check this fact simply
  by substituting  $d-1$ in  line 3 of  $(\ddag)$).
  Then  by induction
 hypothesis we have
  $$h^0(\II_{Res_H(\Xp)}(d-2)) =  0,$$
 so we are done with the residual scheme.

 Now we consider the trace scheme $Tr_H(\Xp)$.
Recalling that the singular point $Q_i$  of $C_i$ lies on the plane
$H$,  we conclude  that the degenerate conic $C_i$ meets $H$
 at $Q_i$ in degree $2$, that is a 2-dot in $H$ with
 support at $Q_i$, which we denote by $Z_i$, $(1\leq i\leq s)$.
 Moreover $L_i$ meets $H$ at the simple point $N_i$, $(1\leq i\leq t')$.
  So we get
 $$Tr_H(\Xp)= (m-1)P|_H+N_1+\cdots+N_{t'}+ \CC+
 Z_1+\cdots+Z_s\subset H\cong\PP^2.$$

 We notice that $\CC$ is a cone configuration of type $m-1$, (i.e.
 a union of $m-1$ intersecting lines passing through a single
 point), thus $\CC$ is a fixed component for the curves of
 $H^0(H,\II_{Tr_H(\Xp)}(d-1))$. Removing the fixed component $\CC$
 implies that
 \begin{equation}\label{tt}
h^0(H,\II_{Tr_H(\Xp)}(d-1))=  h^0(H,\II_{Tr_H(\Xp)-\CC}(d-m)),
\end{equation}
 which leads us to compute $h^0(H,\II_{Tr_H(\Xp)-\CC}(d-m))$, (note that by definition $d-m\geq 1$).
 By setting   $$T= (m-1)P|_H+Z_1+\cdots+Z_s,$$ we observe that $T$  is the generic union
 in $H$ of
 one $(m-1)$-multiple point and $s$ $2$-dots.
 Moreover, by  using $d= 3m-3$ and  $m\geq2$,  we see that  $d-m\geq m-2$ always holds.
 Hence   we can
 apply Lemma \ref{dot} to the scheme $T$ in degree $d-m$, and
  we then  obtain
  \begin{eqnarray*}
  h^0(H,\II_{T}(d-m))&=& \max \left \{ {d-m+2 \choose 2}- {m \choose 2}- 2s, 0 \right
  \}\\
  &=&  \max \left \{ {2m-1 \choose 2}- {m \choose 2}- 2{m-1 \choose 2}, 0 \right
  \}\\
  &=& \max \left\{ {m+1 \choose 2}-1, 0 \right\}\\
  &=&  {m+1 \choose 2}-1.
\end{eqnarray*}
 We have
 $Tr_H(\Xp)-\CC=  T+ N_1+\cdots+N_{t'},$  where
   the points $N_1,\ldots,N_{t'}$ are $t'= {m+1 \choose 2}-1$ generic points in
   $H$.
  So  the equality  $$h^0(H,\II_{T}(d-m))= {m+1 \choose 2}-1$$ implies  that
 $$h^0(H,\II_{Tr_H(\Xp)-\CC}(d-m))= 0,$$
 which is, by  (\ref{tt}), equivalent to
 $$h^0(H,\II_{Tr_H(\Xp)}(d-1))= 0.$$
This  finishes the proof.

\vspace{0.2cm}
\subsection{Case $\mathbf{d\equiv 1}$ (mod 3)}\label{h'1}
 In this case we have, (see $(\ddag)$),
$$  m= \frac{d+2}{3}; \ \ \ \
s= {m \choose 2}; \ \ \ \ t= {m+1 \choose 2}.$$
 Recall that the scheme
     $$X'= (m-1)P|_H+L_1+\cdots+L_t+C_1+\cdots+C_s\subset\PP^3,$$
is generic union of the $t$ lines $L_i$,  the $s$ degenerate conics
$C_i$ with the property that only their singular points $Q_i$ lie on
the plane $H$, and finally $(m-1)P|_H$.

\vspace{0.2cm}
 First consider the case  $d=4$.
 We have $m=2, s=1,
t=3$. (Note that we consider this case $d=4$ separately,
 because the general procedure requires that $s\geq m$ and  $m\geq 3$, while for
 $d=4$ we have $s=1$ and $m=2$). Then
 $$X'= P+L_1+L_2+L_3+C\subset\PP^3,$$
 where only  the point $P$ and the singular point $Q$ of $C$ lie on $H$.
 We need to prove $h^0(\II_{X'}(3))= 0$.
 Let $M_1,M_2$ be the two intersecting lines which form the degenerate conic $C$,
 that is $C= M_1+M_2$. Let $\Xp$ be the scheme obtained from $X'$ by
 degenerating the lines $L_1$ and $M_1$ into $H$ so that they become a
 sundial $\widehat{C'}= L_1'+M_1'+2R$ with  $R\neq Q$, where $(L_1'+M_1')$ is a
 degenerate conic in $H$ and $2R$ is  double point in $\PP^3$ with support
 at the singular point of $(L_1'+M_1')$,
  (the lines $L_2,L_3,M_2$ remain generic not lying on $H$).
 Now by the semicontinuity of cohomology it suffices to prove $h^0(\II_{\Xp}(3))= 0,$
 whereas $$\Xp= P+L_2+L_3+M_2+\widehat{C'}\subset\PP^3.$$

 By  having the fact that $Res_H(\widehat{C'})= R$ and
 $Tr_H(\widehat{C'})= L_1'+M_1'$,
 we easily get
 $$Res_H{\Xp}= L_2+L_3+M_2+R\subset\PP^3;$$
 $$Tr_H{\Xp}= P+N_2+N_3+L_1'+M_1'\subset H,$$
 where $N_2= L_2\cap H$ and $N_3=L_3\cap H$.
 The residual scheme $Res_H(\Xp)$ is  generic union of three lines
 and a simple point in $\PP^3$, so it is clear (see ($\HH_2$) in \S \ref{st})  that
\begin{equation}\label{r11}
  h^0(\II_{Res_H(\Xp)}(2)) =  0.
  \end{equation}
 Moreover, note that  every degree 3 curves lying on $H\cong \PP^2$ and passing through
 $Tr_H{\Xp}$ has to have the
  degenerate conic $(L_1'+M_1')$ as a factor. Hence  taking away this factor
  implies
  $$h^0(H,\II_{Tr_H(\Xp)}(3)) =  h^0(H,\II_{P+N_2+N_3}(1)),$$
but the right hand side, where  $P+N_2+N_3$ is  generic union of
three points in $H$,   is trivially equal to zero.
 So we get the conclusion by putting the last equality  together with
 (\ref{r11}), and then applying Castelnuovo's inequality.

\vspace{0.2cm}
  From now on we assume that  $d\geq 7$.
 This provides that   $m\geq 3$ and $s\geq m$.
 To prove the  vanishing $h^0(\II_{X'}(d-1))= 0$,
 we introduce a scheme $\Xp$ obtained from $X'$ by combining
specializations and degenerations in the following way:
\begin{itemize}
\item   Degenerate the lines $L_1,L_2$ to  sundial
 $$\widehat{C}= C+2Q,$$
 where  $C$ is a degenerate conic and $Q$ is
 its singular point, furthermore, specialize only
  the point $Q$  into $H$.
\item Consider the first $m$  degenerate conics $C_i$, and let
$M_{1,i}, M_{2,i}$ be the two lines which form  $C_i$, $(1\leq i\leq
m)$, this means
 $$C_i= M_{1,i}+ M_{2,i}; \ \ \ \ (1\leq i\leq m).$$
  Then degenerate  $C_1,\ldots, C_m$
 in such a way that the lines
 $M_{1,1},\ldots,M_{1,m}$   become a $(2,m)$-cone
 configuration $\Ch$ supported in $H$, that is
 $$\Ch= \CC+\DD_{H,m}(R),$$
  where $\CC$ is a cone configuration of
 type $m$ in $H$ and $R$ is the singular point of $\CC$.
 Leave
  the lines $M_{2,1},\ldots,M_{2,m}$ remain generic lines,
 not lying on $H$.
\end{itemize}
Now let
 $$\Xp= (m-1)P|_H+ \widehat{C}+L_3+\cdots+L_t+ \Ch+
 M_{2,1}+\cdots+M_{2,m}+C_{m+1}+\cdots+C_s\subset\PP^3.$$

 We perform the process of verifying the residual and the trace of
 the scheme $\Xp$ with respect to the plane $H$, to get $h^0(\II_{\Xp}(d-1))=
 0$, which automatically gives our required vanishing
 $h^0(\II_{X'}(d-1))=0$.

 Let us consider the residual scheme.
 Since the scheme $(m-1)P|_H$ is completely contained in the plane
 $H$, its residual with respect to $H$ is empty.
 Since the sundial $\widehat{C}= C+2Q$ has only its singular point on
 $H$, we see that
$Res_H(\widehat{C})= C$.  Moreover, by recalling that the cone
configuration $\CC$ is contained in $H$ and that $\DD_{H,m}(R)=
mR\cap 2H$, we have
 $$Res_H(\Ch)= Res_H(\CC+\DD_{H,m}(R))=(m-1)R|_H.$$
 So we obtain
 $$Res_H(\Xp)= C+L_3+\cdots+L_t+
 (m-1)R|_H+M_{2,1}+\cdots+M_{2,m}+C_{m+1}+\cdots+C_s\subset \PP^3.$$
 Observe that $Res_H(\Xp)$ is generic union of
 the lines $L_3,\ldots,L_t,M_{2,1},\ldots,M_{2,m}$, which are
 $t'= (t-2)+m= {m+1 \choose 2}-2+m= {m+2 \choose 2}-3$ lines; and the
 degenerate conics
 $C,C_{m+1},\ldots,C_s$, which are $s'= 1+(s-m)=1+{m \choose 2}-m= {m-1 \choose 2}$ degenerate conics
  (whose singular points lie on $H$); and
 finally $(m-1)R|_H$. Then it is immediately clear that   $Res_H(\Xp)$
 is of the type $X'_{(m,s',t';d-1)}$  as
 in $\HH'_{d-2}$ for the
case of
 ${d-1\equiv 0}$ (mod 3), (to check this it is enough to
  substitute   $d-1$ in  the first line of  $(\ddag)$).
  Thus  by induction
 hypothesis we get
  $$h^0(\II_{Res_H(\Xp)}(d-2)) =  0.$$

 Let us consider the trace scheme.
 On the one hand we have degenerated  the lines
 $M_{1,1},\ldots,M_{1,m}$ to the $(2,m)$-cone configuration $\Ch$
 supported in $H$, so $Tr_H(\Ch)$ will be the cone configuration $\CC$ of
 type $m$ in $H$, which we denoted by  $\CC=
 M'_{1,1}+\cdots+M'_{1,m}\subset H$.
 On the other hand the line $M_{2,i}$, $(1\leq i\leq m)$,  meets $H$ in a point which is $M'_{1,i}\cap
 M_{2,i},$ that is already  contained in $M'_{1,i}$ and then in $\CC$. So
 $$Tr_H(\Ch+M_{2,1}+\cdots+M_{2,m})= \CC.$$
We then obtain
 $$Tr_H(\Xp)= (m-1)P|_H+
 2Q|_H+N_3+\cdots+N_t+\CC+Z_{m+1}+\cdots+Z_s\subset
 H\cong\PP^2,$$
 where  $2Q|_H=Tr_H(\widehat{C})$ is a double point in $H$ with support
 at $Q$; $N_i= L_i\cap H$ is a simple point, $(3\leq i\leq t)$; and lastly
 $Z_i= C_i\cap H$
 is a 2-dot in $H$ with support at the point $Q_i$, $(m+1\leq i\leq
 s)$.

 Here our goal is  to prove   $h^0(H,\II_{Tr_H(\Xp)}(d-1))=0$.
 According to the elementary
 observation that the cone configuration $\CC$ of type $m$ is a fixed component  for the curves of
 $H^0(H,\II_{Tr_H(\Xp)}(d-1))$, this is equivalent to prove
 $$h^0(H,\II_{Tr_H(\Xp)-\CC}(d-1-m))=0.$$

 Noting  that $m= \frac{d+2}{3}$ and consequently  $d-1-m=2m-3$, we have  to show that
 $$h^0(H,\II_{Tr_H(\Xp)-\CC}(2m-3))= 0,$$
whereas
 $$Tr_H(\Xp)-\CC= (m-1)P|_H+
 2Q|_H+N_3+\cdots+N_t+Z_{m+1}+\cdots+Z_s\subset
 H\cong\PP^2.$$
  Fix a
  generic line $L\subset H$.  Consider the scheme $\T$ obtained
  from $Tr_H(\Xp)-\CC$ by specializing the points $P, Q$ and the  $m-3$
  points $N_3,\ldots,N_{m-1}$ into $L$. By abuse of notation, we will again denote these specialized
  points by the same letters as before.
  Since $\deg(\T\cap L)= (m-1)+
  2+(m-3)= 2m-2$, the line $L$ is a fixed component for the curves
  of degree $2m-3$ lying on $H$ and containing $\T$.
  Hence
 \begin{equation}\label{rl}
  h^0(H,\II_{\T}(2m-3))=
  h^0(H, \II_{Res_L(\T)}(2m-4)),
  \end{equation}
 where
 $$ Res_L(\T)= (m-2)P|_H+Q+N_m+\cdots+N_t+Z_{m+1}+\cdots+Z_s\subset H\cong \PP^2.$$
By applying Lemma \ref{dot} to the scheme $T'$,
$$T'=
(m-2)P|_H+Z_{m+1}+\cdots+Z_s\subset H,$$ which consists of one
generic $(m-2)$-multiple point and $s-m={m-1 \choose 2}-1$ generic
2-dots, in degree $2m-4$ we have
\begin{eqnarray*}
  h^0(H,\II_{T'}(2m-4))&=& \max \left \{ {2m-2 \choose 2}- {m-1 \choose 2}- 2(s-m), 0 \right
  \}\\
  &=& \max \left \{ {2m-2 \choose 2}- {m-1 \choose 2}- 2{m-1 \choose 2}+2, 0 \right
  \}\\
  &=&  \max \left \{ {m \choose 2}+2, 0 \right
  \}\\
  &=&  {m\choose 2}+2.
\end{eqnarray*}
Since   $Res_L(\T)$ is formed by the scheme  $T'$ plus the points
$Q,N_m,\ldots,N_t$, which  are $t-m+2= {m+1 \choose 2}-m+2= {m
\choose 2}+2$ generic points, it immediately follows
$$ h^0(H, \II_{Res_L(\T)}(2m-4))= 0,$$
and from here by (\ref{rl}) we  get
$$ h^0(H, \II_{\T}(2m-3))= 0,$$
 that by the semicontinuity of cohomology gives
$$ h^0(H, \II_{Tr_H(\Xp)-\CC}(2m-3))= 0,$$
so we are done.

 \vspace{0.2cm}
\subsection{Case $\mathbf{d\equiv 2}$ (mod 3)}\label{h'2}
In this case we have, (see $(\ddag)$),
$$  m= \frac{d+1}{3}; \ \ \ \
s= {m \choose 2}; \ \ \ \ t= {m+2 \choose 2}-1.$$

Note that $t\geq m$ by the definition. Now let $\Xp$ be the scheme
obtained from $X'$ by
 degenerating   $m$ of the lines $L_i$ in such a way that they
 become a  $(2,m)$-cone configuration $\widehat{\CC}$,
  $$\Ch= \CC+ \DD_{H,m}(R),$$
 where $\CC$ is a cone configuration of type $m$ in $H$ and $R$
 is the singular point of $\CC$. The  remaining lines $L_i$,
 which are $t'= t-m= {m+2 \choose 2}-1-m= {m+1 \choose 2}$ lines, are generic not lying on $H$,
 and we  denote them by $L_1,\ldots,L_{t'}$. So we get
 $$\Xp= (m-1)P|_H+ L_1+\cdots+L_{t'}+ \Ch+
 C_1+\cdots+C_s\subset \PP^3.$$

If we prove that  $h^0(\II_{\Xp}(d-1))= 0,$ then by the
semicontinuity of cohomology we have  $h^0(\II_{X'}(d-1))= 0$, and
we are done. In order to show that $h^0(\II_{\Xp}(d-1))= 0$, by
Castelnuovo's inequality we need to show that
$$h^0(\II_{Res_H(\Xp)}(d-2))= 0; \ \ \ h^0(H,\II_{Tr_H(\Xp)}(d-1))=
0.$$

 First we treat the residual of the scheme  $\Xp$
  with respect to the plane $H$.
 Since
   $Res_H((m-1)P|_H)= \emptyset$, moreover
 $Res_H(\Ch)= (m-1)R|_H,$
 we have
 $$Res_H(\Xp)=
 L_1+\cdots+L_{t'}+(m-1)R|_H+C_1+\cdots+C_s\subset \PP^3.$$
 Notice  that  $t'= {m+1 \choose 2}$ and $s= { m \choose 2}$, then
 it is  straightforward to see  that  $Res_H(\Xp)$   is
  of the type   $X'_{(m,s,t';d-1)}$ as in $\HH'_{d-2}$  for the case of
 ${d-1\equiv 1}$ (mod 3), (one can check this
  by substituting  $d-1$ in  line 2 of  $(\ddag)$).
  Thus using the induction
 hypothesis yields the desired vanishing
  $$h^0(\II_{Res_H(\Xp)}(d-2)) =  0.$$

 Now we are left with the trace scheme.
 Observe that the  degenerate conic $C_i$ meets $H$
 at the point $Q_i$, but $\deg (C_i\cap H)=2$. This observation is equivalent
 to saying that the trace of $C_i$ on $H$  is a 2-dot  with
 support at $Q_i$. Let $Z_i$ denote this 2-dot  $(1\leq i\leq s)$.
 Let  $N_i=L_i\cap H$, $(1\leq i\leq t')$.
  We get
 $$Tr_H(\Xp)= (m-1)P|_H+N_1+\cdots+N_{t'}+ \CC+
 Z_1+\cdots+Z_s\subset H\cong\PP^2.$$

  Because the cone configuration  $\CC$ of type $m$ is a fixed component for the curves of
 $H^0(H,\II_{Tr_H(\Xp)}(d-1))$, removing  $\CC$
 implies that
$$h^0(H,\II_{Tr_H(\Xp)}(d-1))=  h^0(H,\II_{Tr_H(\Xp)-\CC}(d-1-m)).$$
 Hence our goal will be to prove
 $$h^0(H,\II_{Tr_H(\Xp)-\CC}(d-1-m))=0,$$
that  can  be obtained  in  a simple way as follows.

 Consider $$T= (m-1)P|_H+Z_1+\cdots+Z_s\subset H,$$
 which is made by
 one generic $(m-1)$-multiple point and $s$ generic $2$-dots.
 First we apply
  Lemma \ref{dot} to the scheme $T$ in degree $d-1-m$,
  which gives (recall that $d=3m-1$ and $s={m \choose 2}$),
  \begin{eqnarray*}
  h^0(H,\II_{T}(d-1-m))&=& \max \left \{ {d-m+1 \choose 2}- {m \choose 2}- 2s, 0 \right
  \}\\
  &=&  \max \left \{ {2m \choose 2}- {m \choose 2}- 2{m \choose 2}, 0 \right
  \}\\
  &=& \max \left\{ {m+1 \choose 2}, 0 \right\}\\
  &=&  {m+1 \choose 2},
\end{eqnarray*}
 next by the fact that
 $Tr_H(\Xp)-\CC$ is formed by $T$ plus
  the points $N_1,\ldots,N_{t'}$, which are $t'= {m+1 \choose 2}$ generic points in
   $H$, we immediately get
 $$h^0(H,\II_{Tr_H(\Xp)-\CC}(d-1-m))= 0.$$
Now the proof is complete.
\section{Proof of $\HH''_{d}$}\label{h''}
This section is devoted to proving the statement $\HH''_d$ for all
$d\geq 3$, that is:

\vspace{0.5cm}
$(\HH''_{d})$:
 \emph{
 Let $d\geq 3$, and
$$r= \left \lfloor {{d+3 \choose 3}}\over {d+1}\right \rfloor;
 \ \ \ \ \ \ \
q={d+3 \choose 3} - r(d+1);$$
$$ m= \left \lfloor {d}\over {3}\right
\rfloor+1 ; \ \ \ \ \ s={m \choose 2}+ (r-m)d- {d+2 \choose 3}; \ \
\ \ \  t= r-m-2s.$$ Let  the scheme $X''\subset \PP^2$ be the
generic union of a cone configuration $\CC$ of type $m$;  $s$ double
points $2Q_1,\ldots,2Q_s$;  $t$ simple points $N_1,\ldots,N_t$;  and
$q$ points $P_1,\ldots,P_q$ lying on a generic line $M$, i.e.
 $$X''=
\CC+2Q_1+\cdots+2Q_s+N_1+\cdots+N_t+P_1+\cdots+P_q\subset \PP^2.$$
Then
$$ h^0(\II_{X''}(d)) =  0.$$}

\vspace{0.2cm}
 Observe  that the sections of $\II_{X''}(d)$ correspond to curves which
 have the cone configuration $\CC$ of type $m$  as  a fixed component. Taking away $\CC$, we  get
 $$ h^0(\II_{X''}(d)) =  h^0(\II_{X''-\CC}(d-m))$$
(note that by the definition we always have $d\geq m$, see
$(\ddag)$).

To prove  $h^0(\II_{X''-\CC}(d-m))=0$, we proceed  by considering
the  two cases $d\equiv 0,1$ (mod 3) simultaneously, and the case
$d\equiv 2$ (mod 3) separately.

\vspace{0.1cm}
\subsection{Case $\mathbf{d\equiv 0,1}$ (mod 3)}\label{h''0}
From $(\ddag)$ we have $q=0$, hence
$$X''-\CC= 2Q_1+\cdots+2Q_s+N_1+\cdots+N_t\subset \PP^2.$$
 Since the points $N_i$ are $t$ generic simple points, to get  $h^0(\II_{X''-\CC}(d-m))=0$
 it suffices  to show that
 \begin{equation}\label{qt}
  h^0(\II_{2Q_1+\cdots+2Q_s}(d-m))= t.
\end{equation}
 On the other hand, since the double point $2Q_i$ imposes 3 independent conditions on
 the linear system of  curves of  given degree in $\PP^2$, the expected value for
 $h^0(\II_{2Q_1+\cdots+2Q_s}(d-m))$ is
 \begin{equation}\label{ex}
 \max\left\{ {d-m+2 \choose 2}-3s, 0 \right\}.
 \end{equation}
  An easy computation by using $(\ddag)$ yields
\begin{itemize}
 \item[]  for
   $d\equiv 0$ (mod 3):  (\ref{ex}) is equal to ${m+2
   \choose 2}-3$, that is $t$;
 \item[] for $d\equiv 1$ (mod 3):  (\ref{ex}) is equal
 to ${m+1 \choose 2}$, that is $t$.
\end{itemize}
 Thus the equality  (\ref{qt}) means that the scheme
  $(2Q_1+\cdots+2Q_s)\subset\PP^2$ has good postulation in degree
  $d-m$.

From Corollary \ref{ah2}, which is an immediate  consequence of the
celebrated theorem by  Alexander and Hirschowitz  (Theorem \ref{AH
th}), we know that
 apart from  the generic union of two double points in degree 2 and
 generic union of 5 double points in degree 4, all
  generic unions of double points in $\PP^2$  have good postulation in a given degree.
 Now  in our situation, one easily checks directly (by using $(\ddag)$) that the generic union of
 $s$ double points in degree $d-m$ is not the exceptional cases of Corollary \ref{ah2}.

\vspace{0.2cm}
\subsection{Case $\mathbf{d\equiv 2}$ (mod 3)}\label{h''2}
From $(\ddag)$ we have $q=m$, which implies that
$$X''-\CC= 2Q_1+\cdots+2Q_s+N_1+\cdots+N_t+P_1+\cdots+P_m\subset \PP^2.$$
Recall that
$$  m= \frac{d+1}{3}; \ \ \ \
s= {m \choose 2}; \ \ \ \ t= {m+2 \choose 2}-1.$$

Now we have $d-m=2m-1$.  For the purpose of having
$h^0(\II_{X''-\CC}(2m-1))=0$, we make a specialization of the scheme
$X''-\CC$ via the line $M$, which already contains the points
$P_1,\ldots,P_m$.
 Let $\Xz$ be the scheme obtained from $X''-\CC$ by specializing
 the points $N_1,\ldots,N_m$ into the line $M$ (note that this is possible because $t>m$ by the construction).
 If we show that $$h^0(\II_{\Xz}(2m-1))=0,$$
 then by the semicontinuity of cohomology we are done.

 Because of the observation  that   $\deg (\Xz\cap M)= m+m= 2m$,
 the line $M$ is a fixed component for the curves of degree $2m-1$
 containing $\Xz$. Hence
 $$h^0(\II_{\Xz}(2m-1))=h^0(\II_{Res_M(\Xz)}(2m-2)),$$
 where
 $$Res_M(\Xz)= 2Q_1+\cdots+2Q_s+N_{m+1}+\cdots+N_t\subset\PP^2.$$

A quite simple consideration shows that
 the scheme $(2Q_1+\cdots+2Q_s)$ in degree $2m-2$ is never
 in the exceptional cases of Corollary \ref{ah2}.
 Therefore it has good postulation, i.e.
 \begin{eqnarray*}
 h^0(\II_{2Q_1+\cdots+2Q_s}(2m-2))&=& \max \left \{ {2m \choose 2}- 3s, 0 \right
  \}\\
  &=&  \max \left \{ {2m \choose 2}- 3{m \choose 2}, 0 \right
  \}\\
  &=& \max \left\{ {m+1 \choose 2}, 0 \right\}\\
 &=&  {m+1 \choose 2}.
\end{eqnarray*}
 By  using the fact that the points $N_{m+1},\ldots,N_t$ are
  $t-m={m+2 \choose 2}-1-m={m+1 \choose 2}$ generic points in
 $\PP^2$, the above equality gives
$$ h^0(\II_{Res_M(\Xz)}(2m-2))=  0,$$
which finishes  the proof.



\end{document}